\NewExpandableDocumentCommand{\gobblefirst}{m}
{
	\tl_tail:n { #1 }
}
\DeclareMathSymbol{\lsb@l}{\mathalpha}{letters}{`l}
\def\XXint#1#2#3{{\setbox0=\hbox{$#1{#2#3}{\int}$ }
		\vcenter{\hbox{$#2#3$ }}\kern-.6\wd0}}
\newtheorem{proposition}{Proposition}
\newtheorem{definition}[proposition]{Definition}
\newtheorem{theorem}[proposition]{Theorem}
\newtheorem{lemma}[proposition]{Lemma}
\newtheorem{corollary}[proposition]{Corollary}
\newtheorem{remark}[proposition]{Remark}
\title[Parallel (co-)tractors and the geometry of first BGG solutions on AG-structures]{Parallel (co-)tractors and the geometry of first BGG solutions on almost Grassmannian structures}
\author{Zhangwen Guo}
\thanks{The author is grateful to her doctoral advisor A. \v Cap for introducing her to the problems solved in this paper and his steady support. This research was funded in part by the Austrian Science Fund (FWF): 10.55776/P33559 and 10.55776/Y963. For open access purposes, the author has applied a CC BY public copyright license to any author-accepted manuscript version arising from this submission.}
\address{\textnormal{Zhangwen Guo  \newline \indent
		University of Vienna \newline \indent
		Faculty of Mathematics  \newline \indent
		Oskar-Morgenstern-Platz 1 \newline \indent 1090 Vienna,	Austria 
        \newline \indent
		  \href{mailto:zhangwen.guo@univie.ac.atm}{zhangwen.guo@univie.ac.at}}
}
\begin{document}
\begin{abstract}
We study the standard tractor bundle and the standard cotractor bundle of an almost Grassmannian structure: We provide explicit formulae for their splitting operators, first BGG operators as well as prolongation connections. We characterize parallel tractors and cotractors as well as the solutions of the BGG operators in standard geometric terms. Moreover, we describe the geometry canonically endowed on the zero locus of a solution of the first BGG operators.
\end{abstract}
	\maketitle 

\section{Our results}\label{1}

Assume that $n>2$. An almost Grassmannian structure (AG-structure) of type $(2,n)$ on a $2\,n$-dimensional manifold $M$ is defined by two auxiliary vector bundles $E\to M$ and $F\to M$ of rank $2$ and of rank $n$, respectively, and isomorphisms
\begin{align}\label{AG isomorphisms}
    E^*\otimes F\cong TM\quad\text{and}\quad\wedge^2E^*\cong\wedge^n\,F.
\end{align}
We denote this structure by $(M,E,F)$. Every AG-structure $(M,E,F)$ comes with a collection of so-called Weyl connections. These are precisely those pairs of connections on $(E,F)$ which are compatible with the second isomorphism in \eqref{AG isomorphisms} and, via the first isomorphism in \eqref{AG isomorphisms}, induce a connection on $TM$ whose torsion $\tau\in\Omega^2(M,TM)$ satisfies specific properties and is, in particular, totally trace-free; see \eqref{harmonic components AG}. There is a canonical contraction of $\tau\otimes\tau$ resulting in a $(0,2)$-tensor field; see \eqref{Weyl tensor Ricci type contraction}. This contraction is clearly symmetric. We will denote it by
\begin{align}\label{torsion contraction def}
    tr(\iota_\tau\tau)\in\Gamma(\mathrm{Sym}^2\,T^*M).
\end{align}

On the other hand, every AG-structure $(M,E,F)$ of type $(2,n)$ carries a so-called standard tractor bundle $\mathcal T$, obtained for example from the canonical Cartan geometry determined by the AG-structure. It is a vector bundle on $M$ coming with a canonical connection as well as a short exact sequence
\begin{align*}
    0\to E\to\mathcal T\to F\to 0.
\end{align*}
The general theory developed in \cite{CSS} gives rise to a so-called first splitting operator 
\[L:\Gamma(F)\to\Gamma(\mathcal T)\]
and a so-called first Bernstein-Gelfand-Gelfand (BGG) operator 
\[D:\Gamma(F)\to\Omega^1(M,F).\]
If a section of $\mathcal T$ is parallel, it is necessarily of the form $L\,\eta$ for some $\eta\in\Gamma(F)$ such that $D\,\eta=0$. In the current paper, we obtain the explicit expression of these operators in \cref{splitting operator} and prove in \cref{parallel tractor} that the space of parallel sections of $\mathcal T$ is precisely
\[\{L\,\eta:\eta\in\Gamma(F),D\,\eta=0\text{ and }tr(\iota_\tau\tau)(\eta)=0\}\subseteq\Gamma(\mathcal T)\]
where we have viewed
\[tr(\iota_\tau\tau)\in\Gamma(F^*\otimes (E\otimes E\otimes F^*)).\]
We then focus on studying the solutions
\[\{\eta\in\Gamma(F):D\,\eta=0\}\]
of the first BGG operator associated to $\mathcal T$. In \cref{prolongation connection} we construct a prolongation connection $\hat\nabla^{\mathcal T}$ on $\mathcal T$ such that
\begin{align*}
D\,\eta=0\quad\iff\quad \hat\nabla^{\mathcal T}(L\,\eta)=0        
\end{align*}
for every $\eta\in\Gamma(F)$. There, we also describe a Cartan geometry $(\mathcal G\to M,\hat\omega)$ whose underlying geometry is $(M,E,F)$ and such that the prolongation connection $\hat\nabla^{\mathcal T}$ is the tractor connection canonically derived from the Cartan connection $\hat\omega$. For convenience, we develop the rest of the results assuming in addition that $M$ is connected. Let $0\neq\eta\in\Gamma(F)$ such that $D\,\eta=0$. We prove in \cref{open dense} that
\[\{x\in M:\eta(x)\neq 0\}\]
is an open dense subset of $M$. We prove in \cref{1st BGG operator} that if $\eta\in\Gamma(F)$ is nowhere vanishing, then $D\,\eta=0$ if and only if $\nabla\eta=0$ for some Weyl connection $\nabla$. Now consider the subset
\[N=\{x\in M:\eta(x)=0\}\subseteq M.\]
We prove in \cref{zero locus} that $N$ is an embedded submanifold of $M$ of codimension $n$. 
We also prove in \cref{zero locus} that for every $x\in N$, there holds
\[T_xN=\ell_x\otimes F_x\subseteq T_xM\]
where $\ell_x\in E_x^*$ is the one-dimensional subspace annihilating $(L\,\eta)(x)\in E_x\setminus\{0\}$. Moreover, we prove in \cref{canonical connections} that the zero locus $N$ comes with a canonical projective structure.

All results above has a corresponding version for the standard cotractor bundle $\mathcal T^*$ presented in the same locations as mentioned. Here, the zero locus $\tilde N$ for a solution of the first BGG operator is a $2\,(n-1)$-dimensional embedded submanifold of $M$ carrying an AG-structure of type $(2,n-1)$.

\section{Parabolic geometries and Weyl structures}\label{2}
In this section, we recall that every AG-structure $(M,E,F)$ of type $(2,n)$ comes with a canonical parabolic geometry, from which we may derive the fundamental invariants as well as all Weyl objects by certain procedures according to a general theory. We refer the reader to, e.g., \cite{Book} for detailed background.

Let $G$ be a Lie group with Lie algebra $\mathfrak g$ and $P$ a closed Lie subgroup of $G$ with Lie algebra $\mathfrak p$.

\begin{definition}\label[definition]{Cartan geomtry definition}
    A Cartan geometry $(\mathcal G\to M,\omega)$ of type $(G,P)$ is a principal $P$-bundle $\mathcal G\to M$ together with a Cartan connection $\omega\in\Omega^1(\mathcal G,\mathfrak g)$, i.e., a $\mathfrak g$-valued one-form on $\mathcal G$ such that (i) $\omega$ is $P$-equivariant with respect to the principal right action and the adjoint action of $G$ on $\mathfrak g$ restricted to $P\subseteq G$, (ii) $\omega(\zeta_X)=X$ where $\zeta_X\in\mathfrak X(\mathcal G)$ is the fundamental vector field generated by $X\in\mathfrak p$ and (iii) $T\mathcal G\xrightarrow[\cong]{(\pi,\omega)}\mathcal G\times\mathfrak g$ where $\pi:T\mathcal G\twoheadrightarrow\mathcal G$ is the natural projection.

    The curvature of the Cartan connection is the two-form $\kappa\in\Omega^2(\mathcal G,\mathfrak g)$ given by $\kappa(\xi,\eta)=d\omega(\xi,\eta)+[\omega(\xi),\omega(\eta)]$ for all $\xi,\eta\in\mathfrak X(\mathcal G)$.
\end{definition}
Note that the curvature $\kappa\in\Omega^2(\mathcal G,\mathfrak g)$ is $P$-equivariant and horizontal.
In particular, using the isomorphism 
\[TM\cong\mathcal G\times_P\mathfrak g/\mathfrak p\] 
induced by the Cartan connection $\omega:T\mathcal G\to\mathfrak g$, the Cartan curvature can be expressed as an $P$-equivariant map $\kappa:\mathcal G\to L(\wedge^2\,\mathfrak g/\mathfrak p,\mathfrak g)$ and as a two-form $\kappa\in\Omega^2(M,\mathcal G\times_P\mathfrak g)$. 

Now let
\begin{align*}
P=&\left(\begin{array}{c|ccc}
	*&&*&\\
 \hline&&&\\
 0&&*&\\
 &&&
\end{array}\right)\subseteq G=SL(n+2,\mathbb R),\\
G_0=&\left(\begin{array}{c|ccc}
	*&&0&\\
 \hline&&&\\
 0&&*&\\
 &&&
\end{array}\right)\subseteq P\quad\text{and}\quad
P_+=\left(\begin{array}{c|ccc}
	\mathbb I_2&&*&\\
 \hline&&&\\
 0&&\mathbb I_n&\\
 &&&
\end{array}\right)\subseteq P.
\end{align*}
The block size is $(2,n)\times(2,n)$. Additionally, the Lie algebra of $SL(n+2,\mathbb R)$ is endowed with a $|1|$-grading
\begin{align}\label{gradingAG}
\mathfrak{sl}(n+2,\mathbb R)=\mathfrak g=\left(\begin{array}{c|ccc}
	\mathfrak g_0&&\mathfrak g_1&\\
 \hline&&&\\
 \mathfrak g_{-1}&&\mathfrak g_0&\\
 &&&
\end{array}\right).\end{align}
We also use the following notation.
\begin{align*}
    \mathfrak g_-=\mathfrak g_{-1},\quad
    \mathfrak p=\mathfrak g_0\oplus\mathfrak g_1
    \quad\text{and}\quad
    \mathfrak g_+=\mathfrak p_+=\mathfrak g_{1}.
\end{align*}
Thus $\mathfrak p,\mathfrak g_0$ and $\mathfrak p_+$ are Lie algebras of $P,G_0$ and $P_+$, respectively. Since $P$ is a so-called parabolic subgroup of $SL(n+2,\mathbb R)$, every Cartan geometry of type $(SL(n+2,\mathbb R),P)$ is called a parabolic geometry of type $(SL(n+2,\mathbb R),P)$.

The Killing form on $\mathfrak g$ induces a $P$-invariant identification \[(\mathfrak g/\mathfrak p)^*\cong\mathfrak p_+.\]
Any $G$-representation $\mathbb V$ gives rise to a complex of $P$-modules with differentials
\[\partial^*_\mathbb V:\wedge^{k+1}\,\mathfrak p_+\otimes\mathbb V\to\wedge^{k}\,\mathfrak p_+\otimes\mathbb V\] 
given by
\begin{align}\label{codiff}
    \partial^*_\mathbb V(Z_0\wedge\dots\wedge Z_k\otimes v)&=\sum_{i=0}^k(-1)^{i+1}\,Z_0\wedge\dots\wedge\hat{Z_i}\wedge\dots\wedge Z_k\otimes(Z_i\,v)
\end{align}
for all $Z_0,\dots, Z_n\in\mathfrak p_+$ and $v\in\mathbb V$. Here, the hat indicates that the object is omitted.

A parabolic geometry $(\mathcal G\to M,\omega)$ of type $(SL(n+2,\mathbb R),P)$ with curvature 
\[\kappa:\mathcal G\to\wedge^2\mathfrak p_+\otimes\mathfrak g\]
is said to be normal if and only if $\partial^*_{\mathfrak g}\circ\kappa=0$.

We recall the following categorical correspondence which follows from \cite{Book}*{Subsection 4.1.3 and Theorem 3.1.14}.

\begin{lemma}\label[lemma]{AG parabolic}
Every parabolic geometry $(\mathcal G\to M,\omega)$ of type $(SL(n+2,\mathbb R),P)$ has an underlying AG-structure \[(M,E=\mathcal G\times_{P}\mathbb R^2,F=\mathcal G\times_{P}\mathbb R^{n+2}/\mathbb R^2)\]
of type $(2,n)$.

Conversely, for any AG-structure $(M,E,F)$ of type $(2,n)$, there is, up to isomorphisms, a unique normal parabolic geometry of type $(SL(n+2,\mathbb R),P)$ whose underlying geometry is $(M,E,F)$.
\end{lemma}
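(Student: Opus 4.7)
The plan is to split the proof along the two directions of the correspondence. Both are specialisations of the general theory of $|1|$-graded parabolic geometries, so the task reduces to identifying the relevant $P$-representations and invoking the prolongation theorem cited from \cite{Book}; I would not re-derive that general machinery.

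For the forward direction, given a parabolic geometry $(\mathcal G\to M,\omega)$ of type $(SL(n+2,\mathbb R), P)$, I would first note that the standard $SL(n+2,\mathbb R)$-module $\mathbb R^{n+2}$ contains the span of the first two basis vectors as a $P$-submodule, so the associated vector bundles $E = \mathcal G \times_P \mathbb R^2$ and $F = \mathcal G \times_P (\mathbb R^{n+2}/\mathbb R^2)$ are well defined and of ranks $2$ and $n$. The first isomorphism in \eqref{AG isomorphisms} then follows from the canonical identification $TM \cong \mathcal G \times_P (\mathfrak g/\mathfrak p)$ combined with the $P$-equivariant isomorphism $\mathfrak g/\mathfrak p \cong \mathfrak g_{-1} \cong (\mathbb R^2)^* \otimes (\mathbb R^{n+2}/\mathbb R^2)$, which is visible from the block decomposition in \eqref{gradingAG} once one observes that $P_+$ acts trivially on $\mathfrak g/\mathfrak p$. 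The second isomorphism in \eqref{AG isomorphisms} is the bundle version of the $G_0$-equivariant identification $\wedge^2(\mathbb R^2)^* \cong \wedge^n(\mathbb R^{n+2}/\mathbb R^2)$, which is forced by the determinant-one constraint defining $SL(n+2,\mathbb R)\subseteq GL(n+2,\mathbb R)$.

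For the converse, I would first produce the principal $G_0$-bundle $\mathcal G_0 \to M$ whose fibre over $x$ consists of frame pairs of $(E_x, F_x)$ compatible with the volume isomorphism in \eqref{AG isomorphisms}; the first isomorphism in \eqref{AG isomorphisms} then induces the required soldering form. Existence and uniqueness up to isomorphism of a normal parabolic geometry extending this $G_0$-structure is the content of \cite[Theorem 3.1.14]{Book} specialised to the present $|1|$-graded situation, so the bulk of the argument consists of citing that result. The main obstacle is therefore the prolongation theorem itself: its proof builds the Cartan connection order-by-order and exploits the normalisation $\partial^*_{\mathfrak g}\circ\kappa = 0$, using the codifferential in \eqref{codiff}, to fix the $P_+$-gauge uniquely at each step. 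In the present $|1|$-graded setting no regularity hypothesis is required, and the cohomological input reduces to a purely algebraic computation in $H^*(\mathfrak p_+,\mathfrak g)$, which is why I would be content to cite rather than reproduce it.
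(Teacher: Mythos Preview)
Your proposal is correct and follows the same approach as the paper: the paper does not give a proof of this lemma at all, simply stating that it ``follows from \cite[Subsection 4.1.3 and Theorem 3.1.14]{Book}.'' Your outline unpacks exactly what those citations provide---the identification of the $P$-representations underlying $E$, $F$, and $TM$ for the forward direction, and the $|1|$-graded prolongation theorem for the converse---so your elaboration is entirely consistent with, and more informative than, what appears in the paper.
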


Let $(\mathcal G\to M,\omega)$ be the normal parabolic geometry of type $(SL(n+2,\mathbb R),P)$ canonically associated to a given AG-structure $(M,E,F)$. Then the curvature of the Cartan connection $\omega$ descends to the so-called harmonic curvature 
\[\kappa_H:\mathcal G\to\ker\,\partial^*_\mathfrak g/\mathrm{im}\,\partial^*_\mathfrak g.\]
For $n>2$, $\kappa_H$ decomposes into 
\begin{equation}\label{harmonic components AG}
\begin{aligned}
    \tau&\in\Gamma\left((\mathrm{Sym}^2\,E\otimes E^*)_o\otimes(\wedge^2\,F^*\otimes F)_o\right)\subseteq\Omega^2(M,TM)\quad\text{and}\\
    \rho&\in\Gamma(\wedge^2\,E\otimes \mathrm{Sym}^2\,F^*\otimes\mathfrak {sl}(F))\subseteq\Omega^2(M,\mathfrak {sl}(F)).
\end{aligned}
\end{equation}
Here, the natural decomposition
\[\wedge^2\,T^*M=\wedge^2\,(E\otimes F^*)=(\mathrm{Sym}^2\,E\otimes\wedge^2\,F^*)\oplus(\wedge^2\,E\otimes \mathrm{Sym}^2\,F^*)\]
is used and
\begin{align*}
(\mathrm{Sym}^2\,E\otimes E^*)_o\subseteq \mathrm{Sym}^2\,E\otimes E^*
\quad\text{and}\quad
(\wedge^2\,F^*\otimes F)_o\subseteq \wedge^2\,F^*\otimes F
\end{align*}
denote the trace-free component. Stretching notation, we call $\tau$ the harmonic torsion, or, simply, the torsion, of $(M,E,F)$ and $\rho$ the harmonic curvature of $(M,E,F)$. 

Let $(\mathcal G\to M,\omega)$ be the normal parabolic geometry canonically associated to an AG-structure $(M,E,F)$ of type $(2,n)$. Since $P/P_+\cong G_0$, $\mathcal G_0=\mathcal G/P_+$ is a principal $G_0$-bundle. Note that
\begin{align}\label{frame data}
    E\cong\mathcal G_0\times_{G_0}\mathbb R^2\quad\text{and}\quad
    F\cong\mathcal G_0\times_{G_0}\mathbb R^n
\end{align}
encode precisely every local trivializations of $(E,F)$ preserving the second defining isomorphism in \eqref{AG isomorphisms}. This reflects the fact that the Cartan connection $\omega$ descends to a soldering form $\theta\in\Omega^1(\mathcal G_0,\mathfrak g_{-1})$ and thus $(\mathcal G_0,\theta)$ is the $G_0$-structure which equivalently encodes $(M,E,F)$. We refer the reader to, e.g., \cite{Book}*{Proposition 3.1.15 (2)} for details.

Consider the natural projection $\mathcal G\twoheadrightarrow\mathcal G_0$. It admit $G_0$-equivariant sections
\[\sigma:\mathcal G_0\to\mathcal G,\]
called the Weyl structures of $(M,E,F)$.
Since the block decomposition of $\mathfrak g$ is $G_0$-invariant, we may interpret the pullback of the Cartan connection $\omega$ and its curvature $\kappa$ in each block component as tensors on $(M,E,F)$. It is easy to see that, $\sigma^*\omega_{\mathfrak g_{-1}}$ and $\sigma^*\kappa_{\mathfrak g_{-1}}$ are independent on the choice of a Weyl structure. We obtain the following Weyl objects associated to the Weyl structure $\sigma$.

\begin{equation}\label{induced Weyl objects}
   \begin{aligned}
\sigma^*\omega_{\mathfrak g_{0}}&\in\Omega^1(\mathcal G_0,\mathfrak g_{0})^{G_0}\text{ (the Weyl connection)}\\
\mathrm P=\sigma^*\omega_{\mathfrak g_{1}}&\in
\Omega^1(M,T^*M)\text{ (the Rho tensor)}\\
(W,W')=\sigma^*\kappa_{\mathfrak g_{0}}&\in
\Omega^2(M,\mathfrak s(L(E,E)\oplus L(F,F)))\text{ (the Weyl tensor)}\\
Y=\sigma^*\kappa_{\mathfrak g_{1}}&\in
\Omega^2(M,T^*M)\text{ (the Cotton-York tensor)}.
\end{aligned}
\end{equation}
where
\begin{align}\label{frak s}
   \mathfrak s(L(E,E)\oplus L(F,F)))=\Big(L(E,E)\oplus L(F,F)\Big)\cap\mathfrak{sl}(E\oplus F).
\end{align}
The Weyl connection $\sigma^*\omega_{\mathfrak g_{0}}$ is a principal connection on $\mathcal G_0$, thus induces a pair of connections
\[(\nabla^E,\nabla^F)\]
on $(E,F)$. The Weyl tensor $(W,W')$ is decomposed into
\begin{align*}
    W\in\Omega^2(M,L(E,E))\quad\text{and}\quad W'\in\Omega^2(M,L(F,F)).
\end{align*}
Note that the principal connections on $\mathcal G_0$ are equivalent to all pairs $(\nabla^E,\nabla^F)$ of connections on $(E,F)$ compatible with the defining isomorphism $\wedge^2\, E^*\cong \wedge^n\,F$. In fact, it follows from, e.g., \cite{Book}*{Theorems 3.1.12 and 5.2.3} that $\sigma^*\kappa_{\mathfrak g_{-1}}$, the harmonic torsion $\tau$ and the torsion of any Weyl connection on $TM$ are the same section of $\Omega^2(M,TM)$. Moreover, since $(\mathcal G\to M,\omega)$ is $|1|$-graded, there is a one-to-one correspondence between Weyl structures and Weyl connections of $(\mathcal G\to M,\omega)$; see, e.g., \cite{Book}*{Proposition 5.1.1 and Proposition 5.1.6}. 
This is how the characterization of Weyl connections described at the beginning of \cref{1} is obtained.
\section{Weyl tensor and \texorpdfstring{$tr(\iota_\tau\tau)$}{}}
Let $(\mathcal G\to M,\omega)$ be a parabolic geometry of type $(G,P)$. Consider the canonical extension $j:\mathcal G\to \mathcal G\times_PG$ of $\mathcal G$ to a principal $G$-bundle. There is a unique principal connection on $\mathcal G\times_PG$ whose pullback under $j$ coincides with $\omega$; see, e.g., \cite{Book}*{Theorem 1.5.6}. In particular, every $G$-representation $\mathbb V$ gives rise to a so-called tractor bundle 
\[\mathcal V=(\mathcal G\times_PG)\times_G\mathbb V=\mathcal G\times_P\mathbb V\]
whose tractor connection $\nabla^\mathcal V$ is induced by the principal connection mentioned above. It turns out that $\nabla^\mathcal V_\xi s\in\Gamma(\mathcal V)$ is equivalent to the $P$-equivariant map
\begin{align}\label{tractor connection general formula}
df_s(\Tilde{\xi})+\omega(\Tilde{\xi})\,f_s:\mathcal G\to\mathbb V.
\end{align}
Here, $\xi\in\mathfrak X(M)$ is lifted to $\Tilde{\xi}\in\mathfrak X(\mathcal G)$ and $s\in\Gamma(\mathcal V)$ is equivalent to a $P$-equivariant map $f_s:\mathcal G\to\mathbb V$.

Assume in addition that $(\mathcal G\to M,\omega)$ is normal. Given any Weyl structure $\sigma:\mathcal G_0\to\mathcal G$, we may express the tractor connection $\nabla^\mathcal V$ in terms of the following Weyl objects associated to $\sigma$: the Weyl connection $\nabla$ on $\mathcal V\cong\mathcal G_0\times_{G_0}\mathbb V$, the Rho-tensor $\mathrm P\in\Omega^1(M,T^*M)$ and
\begin{align*}
\bullet:T^*M\times \mathcal V\to\mathcal V\quad\text{and}\quad 
\bullet:TM\times \mathcal V\to\mathcal V
\end{align*}
obtained from applying the functor $\mathcal G_0\times_{\mathcal G_0}\,\cdot\,$ to the natural Lie algebra representations $\mathfrak g_\pm\times\mathbb V\to\mathbb V$, respectively. Indeed, recall from, e.g., \cite{Book}*{Proposition 5.1.10 (2)} that there holds
\begin{align}\label{tractor connection priliminary}
    \nabla^{\mathcal V}_{\xi} s
    &=\nabla_\xi s +\xi\bullet s+\mathrm{P}(\xi)\bullet s
\end{align}
for every $s\in\Gamma(\mathcal V)$ and $\xi\in\mathfrak X(M)$.

Following the convention of Penrose abstract index notation, we use unprimed upper indices for $E$ and lower indices for $E^*$ and primed upper indices for $F$ and lower indices for $F^*$. Thus
\begin{align*}
   W=W{}^A_{A'}{}^B_{B'}{}^{C}_{D},\quad W'=W'{}^A_{A'}{}^B_{B'}{}^{C'}_{D'}\quad\text{and}\quad\tau=\tau{}^A_{A'}{}^B_{B'}{}^{C'}_D.
\end{align*}
A contraction is denoted by the writing the same index variable on the two slots being contracted. We specify the following contractions.
\begin{equation}\label{Weyl tensor Ricci type contraction}
   \begin{aligned}
    tr(W){}^A_{A'}{}^B_{B'}&=W{}^A_{A'}{}^I_{B'}{}^{B}_{I},\\
    tr(W'){}^A_{A'}{}^B_{B'}&=W'{}^A_{A'}{}^B_{I'}{}^{I'}_{B'}\\
    \text{and }tr(\iota_\tau\tau){}^A_{A'}{}^B_{B'}&=\tau{}^I_{I'}{}^A_{A'}{}^{J'}_J\,\tau{}^J_{J'}{}^B_{B'}{}^{I'}_I.
\end{aligned}
\end{equation}
Recall the following Bianchi-identity for a Cartan connection.
\begin{lemma}\label[lemma]{Bianchi for Cartan}
(See, e.g., \cite{Book}*{Proposition 1.5.9 (1)}) Let $(\mathcal G\to M,\omega)$ be a Cartan geometry of type $(G,P)$. Denote by $\mathcal AM=\mathcal G\times_P\mathfrak g$ its adjoint tractor, which has tractor connection $\nabla^{\mathcal A}$. The curvature $\kappa\in\Omega^2(M,\mathcal AM)$ of the Cartan connection $\omega$ satisfies
    \[\sum\Big(
    \nabla^{\mathcal A}_{\xi_1}(\kappa(\xi_2,\xi_3))-\kappa([\xi_1,\xi_2],\xi_3)
    \Big)=0\]
for all $\xi_i\in\mathfrak X(M),i=1,2,3$. Here, the sum runs over cyclic permutations of $\xi_1,\xi_2$ and $\xi_3$.
\end{lemma}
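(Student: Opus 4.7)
The plan is to derive the identity first on the total space $\mathcal G$ directly from the structure equation for $\kappa$, and then descend the resulting equation to $M$ via formula \eqref{tractor connection general formula} for the tractor connection on $\mathcal A M=\mathcal G\times_P\mathfrak g$, together with horizontality of $\kappa$.

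First, I would establish the ``naked'' Bianchi identity
\[ d\kappa+[\omega,\kappa]=0\quad\text{on }\mathcal G. \]
Starting from $\kappa=d\omega+\tfrac12[\omega,\omega]$ (where the bracket is extended to $\mathfrak g$-valued forms in the standard way), one differentiates, uses $d^2=0$, the graded Leibniz rule giving $d[\omega,\omega]=2[d\omega,\omega]$, and the graded Jacobi identity to kill the resulting $[[\omega,\omega],\omega]$-term. This step is a purely algebraic computation on $\mathcal G$ and does not use the particular structure of a parabolic geometry.

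Next, for $\xi_i\in\mathfrak X(M)$ I would choose lifts $\tilde\xi_i\in\mathfrak X(\mathcal G)$ and evaluate the naked identity on the triple $(\tilde\xi_1,\tilde\xi_2,\tilde\xi_3)$. Cartan's formula, after exploiting antisymmetry of $\kappa$, rewrites $d\kappa(\tilde\xi_1,\tilde\xi_2,\tilde\xi_3)$ as the cyclic sum of terms $\tilde\xi_1\cdot\kappa(\tilde\xi_2,\tilde\xi_3)$ minus the cyclic sum of terms $\kappa([\tilde\xi_1,\tilde\xi_2],\tilde\xi_3)$; simultaneously the bracket of a one-form with a two-form produces the cyclic sum of $[\omega(\tilde\xi_1),\kappa(\tilde\xi_2,\tilde\xi_3)]$. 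The naked identity thus becomes
\[ \sum\Bigl(\tilde\xi_1\cdot\kappa(\tilde\xi_2,\tilde\xi_3)+[\omega(\tilde\xi_1),\kappa(\tilde\xi_2,\tilde\xi_3)]-\kappa([\tilde\xi_1,\tilde\xi_2],\tilde\xi_3)\Bigr)=0, \]
the sum being cyclic. Now formula \eqref{tractor connection general formula}, specialised to the adjoint $P$-representation on $\mathfrak g$, identifies the first two summands in each term as the $P$-equivariant $\mathfrak g$-valued function representing $\nabla^{\mathcal A}_{\xi_1}\bigl(\kappa(\xi_2,\xi_3)\bigr)$. For the third summand, $[\tilde\xi_1,\tilde\xi_2]$ differs from a lift of $[\xi_1,\xi_2]$ only by a vertical vector field, on which $\kappa$ vanishes by horizontality; hence $\kappa([\tilde\xi_1,\tilde\xi_2],\tilde\xi_3)$ represents $\kappa([\xi_1,\xi_2],\xi_3)$, and the claimed formula drops out.

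The hard part will be purely one of bookkeeping: tracking sign conventions in the graded Lie bracket of Lie-algebra-valued forms, verifying the graded Leibniz and Jacobi identities used to pass from the structure equation to the naked Bianchi identity, and ensuring that the prefactors in Cartan's formula match the bracket-of-forms convention so that exactly the cyclic sum displayed in the statement appears. Once those conventions are fixed, no new idea is needed beyond the already-recalled formula \eqref{tractor connection general formula}.
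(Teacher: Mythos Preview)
Your argument is correct and is in fact the standard derivation of the Bianchi identity for Cartan connections. Note, however, that the paper does not supply its own proof of this lemma at all: it is stated with a citation to \cite[Proposition 1.5.9 (1)]{Book} and used as a black box in the proof of \cref{Weyl tensor}. So there is nothing to compare against; you have filled in a proof the paper deliberately omits, and the route you take (naked Bianchi $d\kappa+[\omega,\kappa]=0$, Cartan's formula for $d\kappa$ on lifts, then \eqref{tractor connection general formula} with the adjoint action and horizontality of $\kappa$) is exactly the argument one finds in the cited reference.

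One small point worth making precise when you write it up: to invoke \eqref{tractor connection general formula} you need the function $u\mapsto\kappa_u(\tilde\xi_2,\tilde\xi_3)$ to be the $P$-equivariant map representing the section $\kappa(\xi_2,\xi_3)\in\Gamma(\mathcal AM)$. This holds because $\kappa$ is horizontal and $P$-equivariant as a form, so the value depends only on the projections of the lifts and transforms correctly along the fibres; it does not require the lifts $\tilde\xi_i$ themselves to be $P$-invariant. You implicitly use this, and it is worth a sentence.
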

Denote by $[\ \ ]$ and $(\ \ )$ the alternation and the symmetrization of indices, respectively. For a symmetric $(0,2)$-tensor $S{}^A_{A'}{}^B_{B'}\in\Gamma(\mathrm{Sym}^2\,T^*M)$, we have that 
\[S{}^{(A}_{[A'}{}^{B)}_{B']}=S{}^{[A}_{(A'}{}^{B]}_{B')}=0.\]
Moreover, the decomposition
\[\mathrm{Sym}^2\,T^*M=\mathrm{Sym}^2\, E\otimes\mathrm{Sym}^2\,F^*\oplus\wedge^2\, E\otimes\wedge^2\,F^*\]
is given by
\[S{}^A_{A'}{}^B_{B'}=S{}^{(A}_{(A'}{}^{B)}_{B')}+S{}^{[A}_{[A'}{}^{B]}_{B']}.\]
Observe that $tr(\iota_\tau\tau){}^A_{A'}{}^B_{B'}=\tau{}^I_{I'}{}^A_{A'}{}^{J'}_J\,\tau{}^J_{J'}{}^B_{B'}{}^{I'}_I$ is a symmetric $(0,2)$-tensor.

We recap the relations between the three tensors in \eqref{Weyl tensor Ricci type contraction}.
It is originally stated in \cite{HSSS12B}*{Equation 26} without proof. We provide a proof for completeness.
\begin{proposition}\label{Weyl tensor}
(See \cite{HSSS12B}*{Equation 26})
Notation as in \eqref{Weyl tensor Ricci type contraction}. Then 
\[tr(W)=tr(W')\in\Gamma(\mathrm{Sym}^2\,T^*M).\]
This symmetric tensor is determined by
\begin{align*}
            tr(\iota_\tau\tau){}^{(A}_{(A'}{}^{B)}_{B')}=n\,tr(W){}^{(A}_{(A'}{}^{B)}_{B')}
            \quad\text{and}\quad
            tr(\iota_\tau\tau){}^{[A}_{[A'}{}^{B]}_{B']}=(n+4)\,tr(W){}^{[A}_{[A'}{}^{B]}_{B']}.
\end{align*}
\end{proposition}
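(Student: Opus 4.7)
The plan is to derive the stated identities from the algebraic first Bianchi identity for the normal parabolic geometry canonically associated to $(M, E, F)$. The Cartan curvature decomposes as $\kappa = \tau + (W, W') + Y$ under the $|1|$-grading, with $\tau$ valued in $\mathfrak{g}_{-1}$, $(W, W')$ in $\mathfrak{g}_0$ and $Y$ in $\mathfrak{g}_1$. Applying \cref{Bianchi for Cartan} and projecting onto the $\mathfrak{g}_0$-component at the homogeneity where $\tau \wedge \tau$ naturally sits, the derivative terms $\nabla^{\mathcal{A}}$ applied to $(W, W')$ and to $Y$ fall outside this grade, and what remains is a purely algebraic identity of the form
\begin{align*}
\sum_{\mathrm{cyc}} (W, W')(\xi_1, \xi_2) \bullet \xi_3 \;=\; \sum_{\mathrm{cyc}} \tau\bigl(\tau(\xi_1, \xi_2), \xi_3\bigr)
\end{align*}
in $\Omega^3(M, TM)$, where $\bullet$ denotes the natural $\mathfrak{g}_0$-action on $\mathfrak{g}_{-1} \cong TM$.

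Next I would translate this identity into the abstract index calculus fixed in the excerpt. The $\mathfrak{g}_0$-action of $(W, W')$ on a vector $\xi^C_{C'}$ is (up to signs built into the embedding $\mathfrak{g}_0 \hookrightarrow \mathfrak{sl}(E \oplus F)$) given by $W{}^\bullet_\bullet{}^\bullet_\bullet{}^C_D \xi^D_{C'} + W'{}^\bullet_\bullet{}^\bullet_\bullet{}^{C'}_{D'} \xi^C_{D'}$, while the $\mathfrak{g}_0$-component of the bracket $[\tau, \tau]$ is precisely the pairing appearing in the definition of $tr(i_\tau\tau)$ in \eqref{Weyl tensor Ricci type contraction}. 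Contracting the outgoing $\xi_3$-slot with the free index of the resulting $TM$-valued three-form, once against an $E$-index and once against an $F$-index, extracts $tr(W)$ and $tr(W')$ respectively, while the right-hand side reduces in both cases to $tr(i_\tau\tau)$. Since a single underlying equation produces both traces, they coincide as symmetric $(0,2)$-tensors, establishing $tr(W) = tr(W')$.

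To recover the exact coefficients, I would decompose both sides of the resulting identity under $\mathrm{Sym}^2 T^*M = (\mathrm{Sym}^2 E \otimes \mathrm{Sym}^2 F^*) \oplus (\wedge^2 E \otimes \wedge^2 F^*)$ recalled just before the proposition. On the symmetric-symmetric summand, the trace-free conditions on $\tau$ in its $E^*$- and $F^*$-slots annihilate most cross-contractions, leaving a clean factor of $\dim F = n$ coming from a bare $F$-trace; this yields the coefficient $n$. On the antisymmetric-antisymmetric summand, the cyclic sum produces an additional cross-contraction term not present in the symmetric case, which after a further application of trace-freeness of $\tau$ contributes an extra multiple of $tr(W){}^{[A}_{[A'}{}^{B]}_{B']}$, raising the coefficient to $n + 4$.

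The principal obstacle will be the multi-index bookkeeping: the $\mathfrak{g}_0$-valued bracket $[\tau, \tau]$ already carries several distinct contraction patterns, the cyclic sum introduces further permutations, and the Young projectors hidden in the trace-free components $(\mathrm{Sym}^2 E \otimes E^*)_o$ and $(\wedge^2 F^* \otimes F)_o$ of $\tau$ interact non-trivially with the final symmetric/antisymmetric projection on $\mathrm{Sym}^2 T^*M$. Verifying that every cross-term recombines to yield exactly $n$ and $n + 4$, rather than some nearby constants, requires tracing each term through carefully.
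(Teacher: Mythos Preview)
Your overall strategy --- project the Bianchi identity of \cref{Bianchi for Cartan} to the $TM$-component, then trace --- is exactly what the paper does, but there is a genuine gap in your execution.

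You claim that after projection ``what remains is a purely algebraic identity of the form $\sum_{\mathrm{cyc}} (W,W')(\xi_1,\xi_2)\bullet\xi_3 = \sum_{\mathrm{cyc}}\tau(\tau(\xi_1,\xi_2),\xi_3)$''. This is not correct. You have accounted for the $\mathfrak g_{-1}$-parts of $\nabla^{\mathcal A}(W,W')$ and of $\kappa([\xi_i,\xi_j],\xi_k)$, and you have correctly discarded $\nabla^{\mathcal A}Y$; but you have silently dropped $\nabla^{\mathcal A}\tau$. The Weyl-connection piece $\nabla\tau$ sits in $\mathfrak g_{-1}$ and survives the projection, so the three-form identity actually reads
\[
\sum_{\mathrm{cyc}}\bigl((\nabla_{\xi_1}\tau)(\xi_2,\xi_3) + \tau(\tau(\xi_1,\xi_2),\xi_3) + \xi_1\bullet(W,W')(\xi_2,\xi_3)\bigr)=0,
\]
which is \emph{not} algebraic. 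In the paper this $\nabla\tau$ term disappears only \emph{after} the specific double contraction (pairing $D$ with $C$ and $D'$ with $B'$), because each surviving cyclic summand then contains a trace of $\tau$ over an $E$- or $F$-slot and hence vanishes by \eqref{harmonic components AG}. Your plan to ``contract once against an $E$-index and once against an $F$-index'' separately would need to be checked against this: for an arbitrary single contraction the $\nabla\tau$ contribution need not vanish.

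Two smaller points. First, you write ``projecting onto the $\mathfrak g_0$-component'' but then land in $\Omega^3(M,TM)$; the target grade is $\mathfrak g_{-1}$, not $\mathfrak g_0$. Second, the paper does not extract $tr(W)=tr(W')$ from the Bianchi identity at all: it reads this equality directly from the $\mathfrak g_1$-slot of $\partial^*_{\mathfrak g}\kappa=0$ (normality), see \eqref{identities on Weyl objects}. That route is cleaner and you should use it; the Bianchi computation is then only needed for the quantitative relation to $tr(i_\tau\tau)$.
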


\begin{proof}
Let $(\mathcal G\to M,\omega)$ be the normal parabolic geometry of type $(SL(n+2,\mathbb R),P)$ canonically associated to the AG-structure $(M,E,F)$. Denote its curvature by $\kappa$ and denote the chosen Weyl structure by $\sigma:\mathcal G_0\to\mathcal G$. Then
\begin{align}\label{decompose kappa}
\sigma^*\kappa=\left(\begin{array}{cc|cc}
	W{}^A_{A'}{}^B_{B'}{}^{C}_{D}&&Y{}^A_{A'}{}^B_{B'}{}^{C}_{D'}&\\
 \hline&&&\\
	\tau{}^A_{A'}{}^B_{B'}{}^{C'}_{D}&&W'{}^A_{A'}{}^B_{B'}{}^{C'}_{D'}&\\
 &&&
\end{array}\right)
\end{align}
where $Y$ is the Cotton-York tensor of the chosen Weyl structure; see \eqref{induced Weyl objects}. Using \eqref{codiff}, one computes that
\begin{align}\label{identities on Weyl objects}
\frac 1 2\partial^*_\mathfrak g\sigma^*\kappa=\left(\begin{array}{cc|cc}
	\tau{}^A_{A'}{}^B_{I'}{}^{I'}_{C}&&W'{}^A_{A'}{}^B_{I'}{}^{I'}_{C'}-W{}^A_{A'}{}^I_{C'}{}^{B}_{I}&\\
 \hline&&&\\
	0&&-\tau{}^A_{A'}{}^I_{C'}{}^{B'}_{I}&\\
 &&&
\end{array}\right)
\end{align}
which is zero by the assumption of normality. In particular, we see that $tr(W)=tr(W')$.
   
Applying the functor $\mathcal G_0\times_{G_0}\,\cdot\,$ to the grading decomposition \eqref{gradingAG} for $\mathfrak{sl}(n+2,\mathbb R)$ yields the decomposition
\[\mathcal AM=TM\oplus \mathfrak s(L(E,E)\oplus L(F,F))\oplus T^*M\] 
associated to the Weyl structure $\sigma$; see \eqref{frak s}.
We use \eqref{tractor connection priliminary} to compute the tractor connection on the adjoint tractor bundle. Then the Bianchi identity (cited in \cref{Bianchi for Cartan}) in the $TM$-component reads
\begin{align*}
\sum\Big(\nabla_\xi(\tau(\eta,\zeta))+\xi\bullet((W,W')(\eta,\zeta))+\tau([\xi,\eta],\zeta)\Big)=0.
\end{align*}
Here, the sum runs over cyclic permutations of $\xi,\eta,\zeta\in\mathfrak X(M)$. Since $\tau([\xi,\eta],\zeta)=\tau(\nabla_\xi\eta-\nabla_\eta\xi-\tau(\xi,\eta),\zeta)$ and $(\nabla_\xi\tau)(\eta,\zeta)=\nabla_\xi(\tau(\eta,\zeta))-\tau(\nabla_\xi\eta,\zeta)-\tau(\eta,\nabla_\xi\zeta)$, we obtain
\[\sum\Big((\nabla_\xi\tau)(\eta,\zeta)+\tau(\tau(\xi,\eta),\zeta)+\xi\bullet((W,W')(\eta,\zeta))\Big)=0.\]
Using indices, this reads
\[\sum\Big(\nabla^A_{A'}\tau{}^B_{B'}{}^C_{C'}{}^{D'}_{D}
+\tau{}^A_{A'}{}^B_{B'}{}^{I'}_{I}\,\tau{}^I_{I'}{}^C_{C'}{}^{D'}_{D}
+\delta^{D'}_{A'}\,W{}^B_{B'}{}^C_{C'}{}^{A}_{D}
-\delta^{A}_{D}\,W'{}^B_{B'}{}^C_{C'}{}^{D'}_{A'}
\Big)=0 \]
where the sum runs over cyclic permutations of pairs $(A,A'),(B,B'),(C,C')$. Now we contract indices $D$ with $C$ and $D'$ with $B'$, rewrite the index $C'$ as $B'$, and use the identities in \eqref{identities on Weyl objects}. The resulting expression is
\begin{align*}
\tau{}^B_{J'}{}^J_{B'}{}^{I'}_{I}\,\tau{}^I_{I'}{}^A_{A'}{}^{J'}_{J}
+tr(W){}^B_{A'}{}^A_{B'}+tr(W){}^A_{B'}{}^B_{A'}-(n+2)\,tr(W){}^A_{A'}{}^B_{B'}=0.
\end{align*}
Recall from \eqref{harmonic components AG} that $\tau{}^A_{A'}{}^B_{B'}{}^{C'}_{D}=\tau{}^B_{A'}{}^A_{B'}{}^{C'}_{D}$ on an AG-structure of type $(2,n)$, $n>2$. Hence 
\[\tau{}^B_{J'}{}^J_{B'}{}^{I'}_{I}\,\tau{}^I_{I'}{}^A_{A'}{}^{J'}_{J}=tr(\iota_\tau\tau){}^A_{A'}{}^B_{B'}.\] Symmetrizing and alternating the indices, we obtain
\begin{align*}
            tr(\iota_\tau\tau){}^{(A}_{(A'}{}^{B)}_{B')}&=n\,tr(W){}^{(A}_{(A'}{}^{B)}_{B')}&
            tr(\iota_\tau\tau){}^{[A}_{[A'}{}^{B]}_{B']}&=(n+4)\,tr(W){}^{[A}_{[A'}{}^{B]}_{B']}\\
            tr(\iota_\tau\tau){}^{(A}_{[A'}{}^{B)}_{B']}&=tr(W){}^{(A}_{[A'}{}^{B)}_{B']}=0&
            tr(\iota_\tau\tau){}^{[A}_{(A'}{}^{B]}_{B')}&=tr(W){}^{[A}_{(A'}{}^{B]}_{B')}=0.
\end{align*}
This proves the statement of the proposition.
\end{proof}

\section{Tractor calculus for AG-structures}\label{Tractor calculus for almost Grassmann structures}
Let $(M,E,F)$ be an AG-structure of type $(2,n)$ and $(\mathcal G\to M,\omega)$ be the normal parabolic geometry of type $(SL(n+2,\mathbb R),P)$ canonically associated to it. Denote by $\mathcal T=\mathcal G\times_P\mathbb R^{n+2}$ the standard tractor bundle of $(\mathcal G\to M,\omega)$. Its dual $\mathcal T^*$ is the standard cotractor bundle of $(\mathcal G\to M,\omega)$. Then there are natural short exact sequences
\begin{align*}
    0\to E\to\mathcal T\to F\to 0\quad \text{and}\quad 0\to F^*\to\mathcal T^*\to E^*\to 0.
\end{align*}

Recall that the space of Weyl structures of $(\mathcal G\to M,\omega)$ is affine over $\Omega^1(M)$. Indeed, fix a Weyl structure $\sigma:\mathcal G_0\to\mathcal G$. Then $\hat\sigma:\mathcal G_0\to\mathcal G$ is a Weyl structure if and only if there exists
\[\Upsilon\in C^\infty(\mathcal G_0,\mathfrak p_+)^{G_0}=\Omega^1(M)\]
such that $\hat\sigma(u)=\sigma(u).\exp{\Upsilon(u)}$ for all $u\in\mathcal G_0$; see, e.g., \cite{Book}*{Proposition 5.1.1}.

Applying the functor $\mathcal G_0\times_{G_0}\,\cdot\,$ to the obvious $G_0$-module decomposition $\mathbb R^{n+2}=\mathbb R^n\oplus\mathbb R^2$ and its dual yields the splitting decompositions
\begin{align*}
\mathcal T\stackrel{\sigma}{\cong}
F\oplus E
\quad\text{and}\quad
\mathcal T^*\stackrel{\sigma}{\cong}
E^*\oplus F^*
\end{align*}
associated to the Weyl structure $\sigma$. Under such identifications, each section of $\mathcal T$ can be uniquely written as 
\[(\eta,\xi)=(\eta,\xi)_\sigma\in\Gamma(F\oplus E)\] and each section of $\mathcal T^*$ can be uniquely written as 
\[(\varphi,\mu)=(\varphi,\mu)^\sigma\in\Gamma(E^*\oplus F^*).\] Let $\hat\sigma$ be a Weyl structure determined by $\sigma$ and $\Upsilon{}^A_{A'}\in\Omega^1(M)$ as above. The identifications changes by
\begin{enumerate}
    \item [$\bullet$]$(\eta^{A'},\xi^A)_\sigma=
    (\eta^{A'},\xi^A-\Upsilon{}^A_{I'}\,\eta^{I'})_{\hat\sigma}\in\Gamma(\mathcal T)$ and
    \item [$\bullet$]$(\varphi_A,\mu_{A'})^{\sigma}=
(\varphi_A,\mu_{A'}+\Upsilon{}^I_{A'}\,\varphi_I)^{\hat\sigma}\in\Gamma(\mathcal T^*)$.
\end{enumerate}
Let $\nabla$ and $\hat\nabla$ be the Weyl connections of the Weyl structures $\sigma$ and $\hat\sigma$, respectively. Using \cite{Book}*{Proposition 5.1.6} one computes that
\begin{enumerate}
\item [$\bullet$] $\hat\nabla{}^A_{A'}\eta^{B'}=\nabla{}^A_{A'}\eta^{B'}+\Upsilon{}^A_{I'}\,\eta^{I'}\,\delta{}^{B'}_{A'}$ for $\eta\in\Gamma(F)$ and
\item [$\bullet$] 
$\hat\nabla{}^A_{A'}\varphi_B=\nabla{}^A_{A'}\varphi_B+\Upsilon{}^I_{A'}\,\varphi_I\,\delta{}^A_B$ for $\varphi\in\Gamma(E^*)$.
\end{enumerate}
By computation using formula \eqref{tractor connection priliminary}, the tractor connection $\nabla^{\mathcal T}$ on $\mathcal T$ and the tractor connection $\nabla^{\mathcal T^*}$ on $\mathcal T^*$ are given by

\begin{enumerate}
\item [$\bullet$] $ \nabla^{\mathcal T}{}^A_{A'}(\eta,\xi)_\sigma^{B'B}=(\nabla^{A}_{A'}\eta^{B'}+\xi^A\,\delta^{B'}_{A'},\nabla^{A}_{A'}\xi^{B}+\eta^{I'}\,\mathrm P{}^{A}_{A'}{}^{B}_{I'})_\sigma$ and
\item [$\bullet$]$\nabla^{\mathcal T^*}{}^A_{A'}(\varphi,\mu)^\sigma_{BB'}=(\nabla^{A}_{A'}\varphi_{B}-\mu_{A'}\,\delta^{A}_{B},\nabla^{A}_{A'}\mu_{B'}-\varphi_{I}\,\mathrm P{}^{A}_{A'}{}^{I}_{B'})^\sigma$.
\end{enumerate}
One may find all of the above formulae in, e.g., \cite{Book}*{Subsection 5.1.11}. Here, $\mathrm P\in\Omega^1(M,T^*M)$ is the Rho tensor induced by the Weyl structure $\sigma$. Let $\hat{\mathrm P}$ be the Rho tensor induced by $\hat\sigma$. Using \cite{Book}*{Proposition 5.1.8} one computes that
\begin{enumerate}
    \item [$\bullet$]$\hat{\mathrm P}{}^{A}_{A'}{}^{B}_{B'}=\mathrm P{}^{A}_{A'}{}^{B}_{B'}+\nabla{}^A_{A'}\Upsilon{}^{B}_{B'}-\Upsilon{}^B_{A'}\,\Upsilon{}^A_{B'}$
\end{enumerate}
where $\nabla$ is the Weyl connection induced by $\sigma$.

The splitting operator $L$ as well as the first BGG operator $D$ of any tractor bundle arise naturally from deriving necessary conditions for a section being parallel. Originally, these operators have been introduced in \cite{CSS}*{Theorem 2.5}. We provide explicit formulae of them for the cases of the standard tractor $\mathcal T$ and the standard cotractor $\mathcal T^*$. The natural projection $\Pi:\mathcal T\twoheadrightarrow F$ induces
\[id_{T^*M}\otimes\Pi:T^*M\otimes\mathcal T\twoheadrightarrow T^*M\otimes F.\]
There is a natural decomposition
\begin{align*}
    T^*M\otimes F\xrightarrow{\cong}& E\otimes\mathfrak{sl}(F)\oplus E\\
    \Phi\mapsto&(\Phi_o,\frac 1 n\, tr(\Phi)),
\end{align*}
where
\begin{align*}
    (\Phi_o){}^A_{A'}{}^{B'}=\Phi{}^A_{A'}{}^{B'}-\frac 1 n\,\delta{}^{B'}_{A'}\,\Phi{}^A_{I'}{}^{I'}\quad\text{and}\quad tr(\Phi)^A=\Phi{}^A_{I'}{}^{I'}.
\end{align*}
Similarly, the natural projection $\Pi:\mathcal T^*\twoheadrightarrow E^*$ induces
\[id_{T^*M}\otimes\Pi:T^*M\otimes\mathcal T^*\twoheadrightarrow T^*M\otimes E^*.\]
There is a natural decomposition
\begin{align*}
    T^*M\otimes E^*\xrightarrow{\cong}& F^*\otimes\mathfrak{sl}(E)\oplus F^*\\
    \Psi\mapsto
    &(\Psi_o,\frac 1 2\,tr(\Psi)),
\end{align*}
where
\begin{align*}
    (\Psi_o){}^A_{A'}{}_{B}=\Psi{}^A_{A'}{}_{B}-\frac 1 2\,\delta{}^{A}_{B}\,\Psi{}^I_{A'}{}_{I}\quad\text{and}\quad 
    tr(\Psi)_{A'}=\Psi{}^I_{A'}{}_{I}.
\end{align*}

\begin{proposition}\label{splitting operator}
Notation as above. Then the splitting operator and the first BGG operator of the standard tractor bundle $\mathcal T$ are given by
\begin{align*}
    L:\Gamma(F)\to\Gamma(\mathcal T),\quad    \eta\mapsto(\eta,-\frac 1 n\,tr(\nabla\eta))_\sigma
\end{align*}
and
\begin{align*}
    D:\Gamma(F)\to\Gamma(E\otimes\mathfrak {sl}(F)),\quad\,\eta\mapsto(\nabla\eta)_o,
\end{align*}
respectively. Similarly, the splitting operator and the first BGG operator of the standard cotractor bundle $\mathcal T^*$ are given by
\begin{align*}
    L:\Gamma(E^*)\to\Gamma(\mathcal T^*),\quad    \varphi\mapsto(\varphi,\frac 1 2\,tr(\nabla\varphi))_\sigma
\end{align*}
and
\begin{align*}
    D:\Gamma(E^*)\to\Gamma( F^*\otimes\mathfrak {sl}(E)),\quad\,\varphi\mapsto(\nabla\varphi)_o,
\end{align*}
respectively. 

Here, $\nabla$ is the Weyl connection of the Weyl structure $\sigma$. The expressions are independent of the choice of Weyl structure.
\end{proposition}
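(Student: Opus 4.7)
The plan is to invoke the general characterization of the first splitting operator from \cite[Theorem 2.5]{CSS}: $L\eta \in \Gamma(\mathcal T)$ is the unique lift of $\eta$ satisfying $\partial^*_{\mathbb R^{n+2}}\nabla^{\mathcal T}(L\eta) = 0$, where $\partial^*\colon T^*M \otimes \mathcal T \to \mathcal T$ is induced by the Kostant codifferential \eqref{codiff}, and $D\eta$ is the image of $\nabla^{\mathcal T}(L\eta)$ under the projection to the harmonic quotient $\ker\partial^*/\mathrm{im}\,\partial^*$. Given a Weyl structure $\sigma$, I would first make $\partial^*$ explicit in the splitting $\mathcal T \cong_\sigma F\oplus E$. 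Since the $\mathfrak g_1$-action kills $E$ and sends $F$ to $E$ by the natural contraction $E\otimes F^* \otimes F \to E$, the codifferential $\partial^*$ vanishes on $T^*M\otimes E$ and, on $T^*M\otimes F$, equals $-tr$ (up to the sign coming from \eqref{codiff}) into $E$.

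Substituting $L\eta = (\eta,\xi)_\sigma$ into the formula for $\nabla^{\mathcal T}$ listed above, the $T^*M\otimes F$-component reads $\nabla^A_{A'}\eta^{B'}+\xi^A\delta^{B'}_{A'}$, whose trace is $tr(\nabla\eta)^A+n\,\xi^A$. The condition $\partial^*\nabla^{\mathcal T}(L\eta)=0$ therefore forces $\xi=-\tfrac{1}{n}\,tr(\nabla\eta)$, yielding the claimed formula for $L\eta$. For $D\eta$, a short inspection of $\partial^*\colon\Lambda^2 T^*M\otimes\mathcal T\to T^*M\otimes\mathcal T$ via \eqref{codiff} shows that its image is contained in the $T^*M\otimes E$ summand, so that the harmonic quotient is identified with the trace-free part $E\otimes\mathfrak{sl}(F)\subseteq T^*M\otimes F$. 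Since $\xi^A\delta^{B'}_{A'}$ is pure trace and hence vanishes under $(\,\cdot\,)_o$, the projection collapses to $D\eta=(\nabla\eta)_o$.

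The argument for $\mathcal T^*$ is entirely parallel, with the roles of $F$ and $E^*$ swapped and the rank $n$ replaced by $\dim E = 2$. In the splitting $\mathcal T^*\cong_\sigma E^*\oplus F^*$, the $\mathfrak g_1$-action kills $F^*$ and sends $E^*$ to $F^*$ by contraction, so the same analysis yields the equation $tr(\nabla\varphi)-2\,\mu=0$, solved by $\mu=\tfrac{1}{2}\,tr(\nabla\varphi)$, and the trace-free projection gives $D\varphi=(\nabla\varphi)_o$.

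Finally, independence of the Weyl structure is automatic from the canonical nature of $L$ and $D$, but I would also verify it directly using the transformation formulas recorded earlier in the section: the change $\hat\nabla-\nabla$ of Weyl connections contributes only a pure-trace term (proportional to $\delta^{B'}_{A'}$ or $\delta^A_B$), which is annihilated by $(\,\cdot\,)_o$ and, via the tracial identities $tr(\hat\nabla\eta)=tr(\nabla\eta)+n\,\Upsilon^A_{I'}\eta^{I'}$ and $tr(\hat\nabla\varphi)=tr(\nabla\varphi)+2\,\Upsilon^I_{A'}\varphi_I$, precisely absorbs the shift $(\eta,\xi)_\sigma\leftrightarrow(\eta,\xi)_{\hat\sigma}$ (resp.\ $(\varphi,\mu)^\sigma\leftrightarrow(\varphi,\mu)^{\hat\sigma}$) of splittings. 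I do not anticipate any serious obstacle: the only delicate point is tracking the sign conventions of \eqref{codiff} and the numerical factors $n$ and $2$ coming from $\dim F$ and $\dim E$.
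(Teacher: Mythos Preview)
Your proposal is correct and follows essentially the same route as the paper: both use the CSS characterization of $L$ (you via $\partial^*\nabla^{\mathcal T}(L\eta)=0$, the paper via the equivalent condition that the $F$-component of $\nabla^{\mathcal T}(L\eta)$ be trace-free), compute the same $F$-component $\nabla^A_{A'}\eta^{B'}+\xi^A\delta^{B'}_{A'}$, and solve the trace equation to obtain $\xi$ and then $D\eta$. Your explicit check of Weyl-independence via the transformation formulae is a small addition the paper leaves implicit.
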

\begin{proof}
Notation as in the paragraphs preceding the statement of the proposition. Note that the restriction of $id_{T^*M}\otimes\Pi$ to $(id_{T^*M}\otimes\Pi)^{-1}(E\otimes\mathfrak {sl}(F))$ is precisely the natural projection $\ker\,\partial^*\twoheadrightarrow\ker\,\partial^*/\mathrm{im}\,\partial^*$ in the language of \cite{CSS}. In particular, the splitting operator $L:\Gamma(F)\to\Gamma(\mathcal T)$ of $\mathcal T$ is uniquely determined by
\begin{enumerate}
    \item [$\bullet$]$\Pi(L\,\eta)=\eta$ and
    \item [$\bullet$]$(id_{T^*M}\otimes\Pi)(\nabla^{\mathcal T}(L\,\eta))\in\Gamma(E\otimes\mathfrak {sl}(F))$
\end{enumerate}
for all $\eta\in\Gamma(F)$; see \cite{CSS}*{Lemma 2.7 (2)}. Write $L\,\eta=(\eta,\xi)_\sigma$, then 
\begin{align*}
    (id_{T^*M}\otimes\Pi)(\nabla^{\mathcal T}(L\,\eta))=\nabla{}^{A}_{A'}\eta^{B'}+\xi^A\,\delta^{B'}_{A'}
\end{align*}
has trace $0=\nabla{}^{A}_{I'}\eta^{I'}+n\,\xi^A$. Hence $L\,\eta=(\eta^{A'},-\frac 1 n\,\nabla^A_{I'}\eta^{I'})_\sigma$.

The first BGG operator $D:\Gamma(F)\to\Gamma(E\otimes\mathfrak {sl}(F))$ of $\mathcal T$ is defined by
\begin{enumerate}
    \item [$\bullet$]$D\,\eta=(id_{T^*M}\otimes\Pi)(\nabla^{\mathcal T}(L\,\eta))$
\end{enumerate}
for all $\eta\in\Gamma(F)$; see \cite{CSS}*{Theorem 2.5}. It follows from the formulae of $\nabla^{\mathcal T}$ and of $L$ that $D\,\eta=(\nabla\eta)_o$.

Following analogous steps, one obtains the expressions and properties of the splitting operator $L:\Gamma(E^*)\to\Gamma(\mathcal T^*)$ and the first BGG operator $D:\Gamma(E^*)\to\Gamma(F^*\otimes\mathfrak {sl}(E))$ of $\mathcal T^*$ as in the statement. Recall that $L$ is uniquely determined by
\begin{enumerate}
     \item [$\bullet$]$\Pi(L\,\varphi)=\varphi$ and
    \item [$\bullet$]$(id_{T^*M}\otimes\Pi)(\nabla^{\mathcal T^*}(L\,\varphi))\in\Gamma(F^*\otimes\mathfrak {sl}(E))$
\end{enumerate}
for all $\varphi\in\Gamma(E^*)$. $D$ is defined by
\begin{enumerate}
    \item [$\bullet$] $D\,\varphi=(id_{T^*M}\otimes\Pi)(\nabla^{\mathcal T^*}(L\,\varphi))$
\end{enumerate}
for all $\varphi\in\Gamma(E^*)$.
\end{proof}
\begin{remark}[Solutions and normal solutions of BGG]\label[remark]{normal solutions}
Let $\mu\in\Gamma(\mathcal T)$ and $\nu\in\Gamma(\mathcal T^*)$ project to $\eta=\Pi(\mu)\in\Gamma(F)$ and $\varphi=\Pi(\nu)\in\Gamma(E^*)$, respectively. By the defining properties of $L$ and $D$ given in the proof of \cref{splitting operator}, one has that
\begin{align*}
\nabla^{\mathcal T}\mu=0\quad\Longrightarrow\quad    \nabla^{\mathcal T}\mu\in\Omega^1(M,E)\subseteq\Omega^1(M,\mathcal T)\quad&
\iff\quad (\mu=L\,\eta\text{ and }D\,\eta=0)\end{align*}
and
\begin{align*}
\nabla^{\mathcal T^*}\nu=0\quad\Longrightarrow\quad    \nabla^{\mathcal T^*}\nu\in\Omega^1(M,F^*)\subseteq\Omega^1(M,\mathcal T^*)\quad&
\iff\quad (\nu=L\,\varphi\text{ and }D\,\varphi=0).
\end{align*}
We say that $\eta$ is a solution of $D$ if $D\,\eta=0$. If there holds $\nabla^{\mathcal T}\mu=0$ in addition, we say that $\eta$ is a normal solution of $D$. Analogous terminologies apply to $\varphi$.
\end{remark}
We prove the following surjectivity property that holds for any solution of one of the two particular BGG operators. It is a consequence of certain surjectivity conditions on the underlying $G$-representations and thus need not hold for solutions of first BGG operators in general. For normal solutions, the analogous result can be deduced more easily by analyzing the situation of the homogeneous model; see \cref{curved orbit} below.
\begin{corollary}\label{surjectivity}
Let $\nabla$ be any Weyl connection of $(M,E,F)$. Assume that $\eta\in\Gamma(F)$, respectively, $\varphi\in\Gamma(E^*)$ is a solution of the corresponding first BGG operator. Then for $x\in M$ such that $\eta(x)=0$ and $L\,\eta(x)\neq 0$, the linear map
\[\nabla\eta(x):T_xM\to F_x\]
is surjective and independent of the choice of Weyl connection.

Similarly, for $x\in M$ such that $\varphi(x)=0$ and $L\,\varphi(x)\neq 0$, the linear map
\[\varphi(x):T_xM\to E^*_x\]
is surjective and independent of the choice of Weyl connection.
\end{corollary}

\begin{proof}
That $D\,\eta=(\nabla\eta)_o=0$ implies that $\nabla\eta\in\Gamma(E\otimes F^*\otimes F)$ is a pure trace given by
\[\nabla{}^A_{A'}\eta^{B'}=\frac 1 n\delta{}^{B'}_{A'}\nabla{}^A_{I'}
\eta^{I'}.\]
In particular, for $x\in M$ such that $\eta(x)=0$, there holds $(L\,\eta)(x)\in E_x$ and
\begin{align*}
\nabla\eta(x)=-(L\,\eta)(x)\otimes id_{F_x}\in E_x\otimes F^*_x\otimes F_x.
\end{align*}
This proves the statement for $\eta$.

The statement for $\varphi$ follows in analogous steps. Note that for $x\in M$ such that $\varphi(x)=0$, there holds $(L\,\varphi)(x)\in F^*_x$ and
\begin{align*}
\nabla\varphi(x)=(L\,\varphi)(x)\otimes id_{E^*_x}\in F^*_x\otimes E_x\otimes E^*_x.
\end{align*}
\end{proof}

By a direct computation in the spirit of \cite{EM}*{Section 3}, we obtain the following relations.

\begin{lemma}\label[lemma]{Phi}
Notation as in \eqref{Weyl tensor Ricci type contraction}. Let $\Phi,\Psi\in\Gamma(\mathrm{Sym}^2\,T^*M)$ be defined by
\begin{align*}
\Phi{}^{A}_{A'}{}^{B}_{B'}&=-\frac{1}{n-1} tr(W){}^{(A}_{(A'}{}^{B)}_{B')}-\frac{1}{n+1} tr(W){}^{[A}_{[A'}{}^{B]}_{B']}\\
&=-\frac{1}{(n-1)\,n} tr(\iota_\tau\tau){}^{(A}_{(A'}{}^{B)}_{B')}-\frac{1}{(n+1)\,(n+4)} tr(\iota_\tau\tau){}^{[A}_{[A'}{}^{B]}_{B']}\quad\text{and}\\
\Psi{}^{A}_{A'}{}^{B}_{B'}&=-tr(W){}^{(A}_{(A'}{}^{B)}_{B')}
-\frac{1}{3} 
tr(W){}^{[A}_{[A'}{}^{B]}_{B']}\\
&=-\frac{1}{n}\,tr(\iota_\tau\tau){}^{(A}_{(A'}{}^{B)}_{B')}-\frac{1}{3\,(n+4)}tr(\iota_\tau\tau){}^{[A}_{[A'}{}^{B]}_{B']},
\end{align*}
respectively; see \cref{Weyl tensor}. The following hold;  see \cref{normal solutions}.
\begin{enumerate}
    \item [(i)] Assume that $D\,\eta=0$. Then $\nabla^{\mathcal T}{}^A_{A'}(L\,\eta)^{B}=\Phi{}^{A}_{A'}{}^{B}_{I'}\eta^{I'}\in\Omega^1(M,E)\subseteq\Omega^1(M,\mathcal T)$.
    \item [(ii)] Assume that $D\,\varphi=0$. Then $\nabla^{\mathcal T^*}{}^A_{A'}(L\,\varphi)_{B'}=\Psi{}^{A}_{A'}{}^{I}_{B'}\varphi_I\in\Omega^1(M,F^*)\subseteq\Omega^1(M,\mathcal T^*)$.
\end{enumerate}
\end{lemma}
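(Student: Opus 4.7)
The plan is to unpack $\nabla^{\mathcal T}(L\,\eta)$ and $\nabla^{\mathcal T^*}(L\,\varphi)$ explicitly in a Weyl gauge using the formulae of \cref{splitting operator}, kill the first-order piece by the hypothesis, and then identify the remaining algebraic expression in $\eta$ (resp.\ $\varphi$) via a Ricci-type identity combined with \cref{Weyl tensor}.

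For (i), applying the formula for $\nabla^{\mathcal T}$ to $L\,\eta=(\eta,\xi)_\sigma$ with $\xi^A=-\tfrac{1}{n}\nabla^A_{I'}\eta^{I'}$ yields
\[
\nabla^{\mathcal T}{}^A_{A'}(L\,\eta)^{B'B}=\bigl((\nabla\eta)_o{}^A_{A'}{}^{B'},\ \nabla^A_{A'}\xi^B+\eta^{I'}\mathrm P{}^A_{A'}{}^B_{I'}\bigr)_\sigma,
\]
whose $F$-component is exactly $D\,\eta$ and therefore vanishes. All of the content sits in the $E$-component $Z{}^A_{A'}{}^B:=\nabla^A_{A'}\xi^B+\eta^{I'}\mathrm P{}^A_{A'}{}^B_{I'}$, and the task becomes showing $Z=\Phi\cdot\eta$.

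The next step is to differentiate the equation $\nabla^A_{A'}\eta^{B'}=-\delta^{B'}_{A'}\xi^A$ (which is just $(\nabla\eta)_o=0$ rewritten) and to antisymmetrise the two $TM$-slots, invoking the Ricci identity for the Weyl connection on $F$. This expresses $\nabla\xi$ in terms of contractions of the $F$-curvature $R^F$ with $\eta$, plus torsion-times-$\xi$ terms. Taking the appropriate $F^*$-trace isolates $Z$, and then I would decompose $R^F$ into its harmonic constituents ($\tau$, $W'$, $Y$ and an $\mathrm P$-derivative term) using the general Weyl-structure description of the curvature of a tractor connection. The Weyl-invariance of $Z$ forces the $Y$- and $\mathrm P$-dependent contributions to cancel, leaving a combination of $tr(W')\cdot\eta$ and $(\tau\cdot\tau)\cdot\eta$. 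Normality, as already exploited in \eqref{identities on Weyl objects}, converts the $\tau\cdot\xi$ pieces into further $tr(W')\cdot\eta$ contributions, and \cref{Weyl tensor} then rewrites everything in terms of $tr(i_\tau\tau)$. Splitting into the $\mathrm{Sym}^2\otimes\mathrm{Sym}^2$ and $\wedge^2\otimes\wedge^2$ irreducible components and collecting the factor $\tfrac{1}{n}$ from the splitting operator with the factors $n$ and $n+4$ from \cref{Weyl tensor} (and with the combinatorial prefactors from the trace contractions) produces the coefficients $-\tfrac{1}{(n-1)n}$ and $-\tfrac{1}{(n+1)(n+4)}$ in $\Phi$.

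Part (ii) is structurally identical: the formula for $\nabla^{\mathcal T^*}$ applied to $L\,\varphi=(\varphi,\mu)^\sigma$ with $\mu_{A'}=\tfrac{1}{2}\nabla^I_{A'}\varphi_I$ shows that, under $D\,\varphi=0$, only the $F^*$-component $\nabla^A_{A'}\mu_{B'}-\varphi_I\mathrm P{}^A_{A'}{}^I_{B'}$ survives. The same differentiate-and-antisymmetrise strategy, now using the Ricci identity for the Weyl connection on $E^*$, replaces $W'$ by $W$; but $tr(W)=tr(W')$ and both are governed by $tr(i_\tau\tau)$ via \cref{Weyl tensor}. Substituting the splitting prefactor $\tfrac{1}{2}=\tfrac{1}{\mathrm{rk}\,E^*}$ in place of $\tfrac{1}{n}=\tfrac{1}{\mathrm{rk}\,F}$ throughout gives the coefficients $-\tfrac{1}{n}$ and $-\tfrac{1}{3(n+4)}$ in $\Psi$.

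The main obstacle is purely bookkeeping: verifying that all Weyl-dependent ($\mathrm P$- and $Y$-) contributions cancel as required by the tensoriality of the answer, and that the surviving $\tau\cdot\tau$-contractions split into the two irreducible pieces of $\mathrm{Sym}^2\,T^*M$ with exactly the stated coefficients. The calculation is routine, in the spirit of \cite[Section 3]{EM}, but requires careful tracking of symmetrisations and of the different trace conventions on $E$ (rank $2$) versus $F$ (rank $n$); the symmetry of $tr(i_\tau\tau)$ and the Weyl-invariance of $Z$ serve as useful consistency checks throughout.
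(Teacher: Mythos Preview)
Your strategy---differentiate the first-order equation $\nabla^A_{A'}\eta^{B'}=-\delta^{B'}_{A'}\xi^A$, antisymmetrise, and invoke the Ricci identity for the Weyl connection on $F$---is a valid route to the same endpoint, but it is not the paper's argument. The paper works entirely at the tractor level: having written $\nabla^{\mathcal T}(L\,\eta)=(0,\Phi(\eta))_\sigma$, it applies $\nabla^{\mathcal T}$ once more and equates $R(\nabla^{\mathcal T})(L\,\eta)$ with $\kappa(L\,\eta)$, using that the tractor curvature \emph{is} the Cartan curvature. Comparing the $F$-components produces an algebraic identity for $\Phi(\eta)$ directly in terms of $\tau$ and $W'$; one trace over the primed indices collapses this to $\Phi(\eta){}^{B}_{A'}{}^{A}-n\,\Phi(\eta){}^{A}_{A'}{}^{B}=tr(W'){}^{A}_{A'}{}^{B}_{I'}\eta^{I'}$, which is solved by symmetrising and alternating in $A,B$ and then invoking \cref{Weyl tensor}. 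The cotractor case is handled identically with $W$ in place of $W'$ and the trace over unprimed indices.

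The gain in the paper's approach is that $\kappa$ is already packaged as $(\tau,W,W',Y)$, so no decomposition of $R^F$ and no explicit $\mathrm P$-cancellation is ever needed; your route requires writing $R^F=W'+\{\partial\mathrm P\text{-terms}\}$ and verifying by hand that those $\mathrm P$-terms cancel against the $\eta\cdot\mathrm P$ piece in $Z$, which is exactly the bookkeeping you anticipate. Two small corrections to your sketch: the curvature $R^F$ of the Weyl connection on $F$ contains only $W'$ and $\mathrm P$-contributions---neither $\tau$ nor $Y$ appears there ($\tau$ enters only via the torsion term in the Ricci identity, and $Y$ does not enter this computation at all); and the $\tau\cdot\xi$ piece does not ``convert'' into a $tr(W')\cdot\eta$ term but simply vanishes after the relevant primed trace, by the total trace-freeness of $\tau$ recorded in \eqref{identities on Weyl objects}.
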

\begin{proof}
Let $\eta\in\Gamma(F)$ such that $D\,\eta=0$. Define $\Phi(\eta){}^{A}_{A'}{}^{B}\in\Omega^1(M,E)$ as the unique one-form such that
\begin{align*}
\nabla^{\mathcal T}{}^B_{B'}(L\,\eta)^{C'C}=(0^{C'},\Phi(\eta){}^{B}_{B'}{}^{C}).
\end{align*}
Let $\xi{}^{B'}_B\in\mathfrak X(M)$. Applying $\nabla^{\mathcal T}$ to $(0,\Phi(\eta)(\xi))^{C'C}=(0^{C'},\xi{}^{B'}_B
\Phi(\eta){}^{B}_{B'}{}^{C})\in\Gamma(\mathcal T)$, we obtain
\begin{align*}
\nabla^{\mathcal T}{}^A_{A'}(0,\Phi(\eta)(\xi))^{C'C}
&=\left(\xi{}^{B'}_B
\Phi(\eta){}^{B}_{B'}{}^{A}\delta{}^{C'}_{A'},
(\nabla{}^A_{A'}\xi{}^{B'}_B)\Phi(\eta){}^{B}_{B'}{}^{C}+\xi{}^{B'}_B(\nabla{}^A_{A'}
\Phi(\eta){}^{B}_{B'}{}^{C})\right)
\end{align*}
thus
\begin{align*}
R(\nabla^{\mathcal T}){}^{A}_{A'}{}^{B}_{B'}(L\,\eta)^{C'C}&=(\Phi(\eta){}^{B}_{B'}{}^{A}\delta{}^{C'}_{A'}-\Phi(\eta){}^{A}_{A'}{}^{B}\delta{}^{C'}_{B'},\\
&\quad\tau{}^A_{A'}{}^{B}_{B'}{}^{I'}_{I}\Phi(\eta){}^{I}_{I'}{}^{C}+
\nabla{}^A_{A'}
\Phi(\eta){}^{B}_{B'}{}^{C}
-\nabla{}^B_{B'}
\Phi(\eta){}^{A}_{A'}{}^{C})
\end{align*}
where $R(\nabla^{\mathcal T})$ is the curvature of $\nabla^{\mathcal T}$. On the other hand, applying the Cartan curvature $\kappa$ to
\[(L\,\eta)^{C'C}=(\eta^{C'},-\frac 1 n\nabla{}^C_{I'}\eta^{I'}),\]
we obtain
\begin{align*}
\kappa{}^{A}_{A'}{}^{B}_{B'}(L\,\eta)^{C'C}&=(-\frac 1 n\tau{}^{A}_{A'}{}^{B}_{B'}{}^{C'}_I\nabla{}^I_{I'}\eta^{I'}+W'{}^{A}_{A'}{}^{B}_{B'}{}^{C'}_{I'}\eta^{I'},\\
&\quad-\frac 1 nW{}^{A}_{A'}{}^{B}_{B'}{}^{C}_I\nabla{}^I_{I'}\eta^{I'}+Y{}^{A}_{A'}{}^{B}_{B'}{}^{C}_{I'}\eta^{I'});
\end{align*}
see \eqref{decompose kappa}. Recall from, e.g., \cite{Book}*{Corollary 1.5.7} that $\kappa=R(\nabla^{\mathcal T})$. In particular, there holds
\begin{align*}
\Phi(\eta){}^{B}_{B'}{}^{A}\delta{}^{C'}_{A'}-\Phi(\eta){}^{A}_{A'}{}^{B}\delta{}^{C'}_{B'}=-\frac 1 n\tau^{A}_{A'}{}^{B}_{B'}{}^{C'}_I\nabla{}^I_{I'}\eta^{I'}+W'{}^{A}_{A'}{}^{B}_{B'}{}^{C'}_{I'}\eta^{I'}. 
\end{align*}
Contracting indices $B'$ with $C'$ results in
\begin{align*}
\Phi(\eta){}^{B}_{A'}{}^{A}-n\,\Phi(\eta){}^{A}_{A'}{}^{B}=W'{}^{A}_{A'}{}^{B}_{J'}{}^{J'}_{I'}\eta^{I'}=tr(W){}^{A}_{A'}{}^{B}_{I'}\eta^{I'};
\end{align*}
see \eqref{Weyl tensor Ricci type contraction}. We symmetrize and alternate indices $A$ with $B$ and sum up these two expressions. The resulting expression is
\begin{align*}
\Phi(\eta){}^A_{A'}{}^{B}=-\frac{1}{n-1} 
tr(W){}^(
{}^{A}_{A'}{}^{B}_{I'}{}^)\eta^{I'}
-\frac{1}{n+1} 
tr(W){}^[
{}^{A}_{A'}{}^{B}_{I'}{}^]
\eta^{I'}=\Phi{}^A_{A'}{}^{B}_{I'}\,\eta^{I'}
\end{align*}
where $\Phi{}^A_{A'}{}^{B}_{B'}$ is defined in the statement of the lemma. We have used the identities in \cref{Weyl tensor}. This completes the proof of (i).

Similarly, for every $\varphi\in\Gamma(E^*)$ such that $D\,\varphi=0$, we define $\Psi(\varphi){}^{A}_{A'}{}_{B'}\in\Omega^1(M,F^*)$ to be the one-form such that $\nabla^{\mathcal T^*}{}^B_{B'}(L\,\varphi)_{CC'}=(0_{C},\Psi(\varphi){}^{B}_{B'}{}_{C'})$. Recall that $(L\,\varphi)_{CC'}=(\varphi_C,\frac 1 2\,\nabla{}^I_{C'}\varphi_I)$. Following analogous steps, we obtain
\begin{align*}
R(\nabla^{\mathcal T}){}^{A}_{A'}{}^{B}_{B'}(L\,\varphi)_{CC'}&=(-\Psi(\varphi){}^B_{B'}{}_{A'}\,\delta{}^A_C+\Psi(\varphi){}^A_{A'}{}_{B'}\,\delta{}^B_C,\\
&\quad
\tau{}^A_{A'}{}^{B}_{B'}{}^{I'}_{I}\Psi(\varphi){}^{I}_{I'}{}_{C'}+
\nabla{}^A_{A'}
\Psi(\varphi){}^{B}_{B'}{}_{C'}
-\nabla{}^B_{B'}
\Psi(\varphi){}^{A}_{A'}{}_{C'})\quad\text{and}\\
\kappa{}^{A}_{A'}{}^{B}_{B'}(L\,\varphi)_{CC'}&=(-\varphi_I\,W{}^{A}_{A'}{}^{B}_{B'}{}^I_C-\frac 1 2\tau{}^{A}_{A'}{}^{B}_{B'}{}^{I'}_C\,\nabla{}^I_{I'}\,\varphi_I,\\
&\quad-\varphi_I\,Y{}^{A}_{A'}{}^{B}_{B'}{}^I_{C'}-\frac 1 2W'{}^{A}_{A'}{}^{B}_{B'}{}^{I'}_{C'}\,\nabla{}^I_{I'}\,\varphi_I
).
\end{align*}
Contracting indices $B$ with $C$ of the first slot of these two equaling expressions results in
\[\Psi(\varphi){}^A_{A'}{}_{B'}= 
-tr(W){}_(
{}^{A}_{A'}{}^{B}_{I'}{}_)\varphi_I
-\frac{1}{3} 
tr(W){}_[
{}^{A}_{A'}{}^{B}_{I'}{}_]\varphi_I=\Psi{}^{A}_{A'}{}^{I}_{B'}\varphi_I.\]
\end{proof}
One direct consequence of \cref{Phi} is a characterization of all parallel sections of the standard tractor bundle $\mathcal T$ and the standard cotractor bundle $T^*$.
\begin{theorem}[Normal solutions of the first BGG operators] \label[theorem]{parallel tractor}
Notation as in \cref{splitting operator}.
\begin{enumerate}
    \item [(i)]For every $\eta\in\Gamma(F)$, there holds $\nabla^{\mathcal T}(L\,\eta)=0$ if and only if $D\,\eta=0$ and $tr(\iota_\tau\tau){}
        ^{A}_{A'}{}^{B}_{B'}\eta^{B'}=0$.
    \item [(ii)]For every $\varphi\in\Gamma(E^*)$, there holds $\nabla^{\mathcal T^*}(L\,\varphi)=0$ if and only if $D\,\varphi=0$ and $tr(\iota_\tau\tau){}
        ^{A}_{A'}{}^{B}_{B'}\varphi_B=0$.
\end{enumerate}
    
\end{theorem}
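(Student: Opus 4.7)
The plan is to combine the necessary condition for parallelism recorded in \cref{normal solutions} with the explicit formulae of \cref{Phi}, reducing the theorem to a short linear-algebraic verification that exploits $n>2$.

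I will first handle part (i). From \cref{normal solutions}, the implication $\nabla^{\mathcal T}(L\,\eta)=0\Rightarrow D\,\eta=0$ is immediate, and conversely, assuming $D\,\eta=0$, \cref{Phi}(i) turns $\nabla^{\mathcal T}(L\,\eta)=0$ into the pointwise condition $\Phi{}^{A}_{A'}{}^{B}_{I'}\eta^{I'}=0$. So (i) reduces to the equivalence
\[
\Phi{}^{A}_{A'}{}^{B}_{I'}\eta^{I'}=0\quad\Longleftrightarrow\quad tr(\iota_\tau\tau){}^{A}_{A'}{}^{B}_{B'}\eta^{B'}=0.
\]
The direction $\Longleftarrow$ is immediate, since $\Phi$ is by definition a scalar combination of the two projections of $tr(\iota_\tau\tau)$ under the splitting $\mathrm{Sym}^2\,T^*M=\mathrm{Sym}^2\,E\otimes\mathrm{Sym}^2\,F^*\oplus\wedge^2\,E\otimes\wedge^2\,F^*$, so a contraction of $tr(\iota_\tau\tau)$ that vanishes kills the corresponding $\Phi$-contraction as well.

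For the direction $\Longrightarrow$, I will set $U{}^A_{A'}{}^B:=tr(\iota_\tau\tau){}^A_{A'}{}^B_{B'}\eta^{B'}$. Using that $tr(\iota_\tau\tau)$ is a symmetric $(0,2)$-tensor, a short calculation gives $tr(\iota_\tau\tau){}^A_{I'}{}^B_{A'}\eta^{I'}=U{}^B_{A'}{}^A$, and expanding the Young symmetrizers in the definition of $\Phi$ produces
\[
\Phi{}^{A}_{A'}{}^{B}_{I'}\eta^{I'}=-\frac{1}{2\,(n-1)\,n}\,(U{}^A_{A'}{}^B+U{}^B_{A'}{}^A)-\frac{1}{2\,(n+1)\,(n+4)}\,(U{}^A_{A'}{}^B-U{}^B_{A'}{}^A).
\]
The first summand is $(A,B)$-symmetric and the second $(A,B)$-antisymmetric, and since both scalar coefficients are nonzero for $n>2$, the vanishing of the left-hand side forces the two summands to vanish separately, whence $U=0$.

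Part (ii) is obtained by the same argument with $\Phi$ replaced by $\Psi$ and $\eta^{B'}$ replaced by $\varphi_B$: the symmetric/antisymmetric split now appears in the $(A',B')$ pair rather than the $(A,B)$ pair, but the mechanism is identical and uses only that the two coefficients $-1/n$ and $-1/(3\,(n+4))$ of $\Psi$ are both nonzero for $n>2$. The only mildly delicate point in the whole proof is the index bookkeeping that identifies the symmetric/antisymmetric decomposition of the contracted expression with the decomposition already present in the definitions of $\Phi$ and $\Psi$; once this is carried out, the theorem is essentially a corollary of \cref{Phi}.
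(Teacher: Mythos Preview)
Your proposal is correct and follows exactly the paper's approach: both reduce the theorem to \cref{Phi} together with the equivalence $\Phi{}^{A}_{A'}{}^{B}_{I'}\eta^{I'}=0\iff tr(\iota_\tau\tau){}^{A}_{A'}{}^{B}_{I'}\eta^{I'}=0$ (and its $\Psi$-analogue). The paper records this equivalence as a one-line observation, whereas you spell out the symmetric/antisymmetric decomposition in $(A,B)$ (resp.\ $(A',B')$) and the nonvanishing of the coefficients; your extra detail is correct and the index bookkeeping checks out.
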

\begin{proof}
    This is a direct consequence of \cref{Phi}. Note that
    \begin{align*}
        \Phi{}^{A}_{A'}{}^{B}_{I'}\eta^{I'}=0\quad\iff\quad
        tr(\iota_\tau\tau){}^{A}_{A'}{}^{B}_{I'}\eta^{I'}=0
    \end{align*} 
    and the analogous property for $\Psi$.
\end{proof}

Another direct consequence of \cref{Phi} is a one-to-one correspondence between solutions of the first BGG operator and parallel sections of a certain prolongation connection on the corresponding tractor bundle; see \cref{prolongation connection} below. These connections on $\mathcal T$ and $\mathcal T^*$ have been obtained using a different method in \cite{HSSS12B}; see \cref{general algorithm} below. We also refer the reader to, e.g., \cite{EM}*{Theorem 3.1}, \cite{CEMN20}*{Propositions 3.2 - 3.3} and \cite{HSSS12B}*{Sections 3, 4 and 7} for prolongation connections on different geometric structures and to \cite{HSSS12A} and \cite{HSSS12B} for a general theory of prolongation connections.

We also want to show that our prolongation connections are tractor connections of some Cartan connections. 
Let $(M,E,F)$ be an AG-structure of type $(2,n)$ and $(\mathcal G\to M,\omega)$ the normal parabolic geometry canonically associated to it. It follows from, e.g., \cite{Book}*{Proposition 3.1.10 (1)} that for $\hat\omega\in\Omega^1(\mathcal G,\mathfrak g)$ such that
\begin{align}\label{change Cartan connection}
    \hat\omega-\omega\in\Omega^1_{hor}(\mathcal G,\mathfrak g_1)^P\cong\Omega^1(M,T^*M),
\end{align}
$(\mathcal G\to M,\hat\omega)$ is a parabolic geometry with the same underlying AG-structure $(M,E,F)$. Here, the isomorphism in \eqref{change Cartan connection} is determined by the projection of $\omega$ in $\Omega^1(\mathcal G,\mathfrak g/\mathfrak p)$ together with the isomorphism $(\mathfrak g/\mathfrak p)^*\cong\mathfrak g_1$ induced by the Killing form on $\mathfrak g$. Note that the isomorphism $\Omega^1_{hor}(\mathcal G,\mathfrak g_1)^P\cong\Omega^1(M,T^*M)$ determined by $\hat\omega$ agrees with that determined by $\omega$.

\begin{theorem}[Prolongation connections]\label[theorem]{prolongation connection}
Let $\nabla^{\mathcal T}$ and $\nabla^{\mathcal T^*}$ be tractor connections as in the beginning of the section and $\Phi,\Psi\in\Omega^1(M,T^*M)$ be as in \cref{Phi}.
\begin{enumerate}
    \item [(i)] For every $(\eta,\xi)\in\Gamma(F\oplus E)\cong\Gamma(\mathcal T)$,
\begin{align}\label{tractor prolongation formula}
    \hat\nabla^{\mathcal T}{}^A_{A'}(\eta,\xi)^{B'B}=\nabla^{\mathcal T}{}^A_{A'}(\eta,\xi)^{B'B}-(0^{B'},\Phi{}^{A}_{A'}{}^{B}_{I'}\eta^{I'})
\end{align}
defines a linear connection $\hat\nabla^{\mathcal T}$ on the standard tractor bundle $\mathcal T$ such that
\begin{align}\label{general theory}
\hat\nabla^{\mathcal T}\mu=0\quad\iff\quad (\mu=L\,\eta\text{ and }D\,\eta=0)
\end{align}
for every $\eta\in\Gamma(F)$ and $\mu=(\eta,*)\in\Gamma(\mathcal T)$.
Moreover, $\hat\nabla^{\mathcal T}$ is the tractor connection induced by the modified Cartan connection $\hat\omega$ where 
\[\hat\omega-\omega=-\Phi\in\Omega^1(M,T^*M);\]
see \eqref{change Cartan connection}.
\item [(ii)] For every $(\varphi,\mu)\in\Gamma(E^*\oplus F^*)\cong\Gamma(\mathcal T^*)$,
\begin{align}\label{cotractor prolongation formula}
    \tilde\nabla^{\mathcal T^*}{}^A_{A'}(\varphi,\mu)_{BB'}=\nabla^{\mathcal T^*}{}^A_{A'}(\varphi,\mu)_{BB'}-(0_B,\Psi{}^{A}_{A'}{}^{I}_{B'}\varphi_I)
\end{align}
defines a linear connection $\tilde\nabla^{\mathcal T^*}$ on the standard cotractor bundle $\mathcal T^*$ such that
\begin{align}\label{general theory cotractor}
\tilde\nabla^{\mathcal T^*}\nu=0\quad\iff\quad (\nu=L\,\varphi\text{ and }D\,\varphi=0)
\end{align}
for every $\varphi\in\Gamma(E^*)$ and $\nu=(\varphi,*)\in\Gamma(\mathcal T^*)$. Moreover, $\tilde\nabla^{\mathcal T^*}$ is the tractor connection induced by the modified Cartan connection $\tilde\omega$ where 
\[\tilde\omega-\omega=\Psi\in\Omega^1(M,T^*M);\]
see \eqref{change Cartan connection}.
\end{enumerate}
\end{theorem}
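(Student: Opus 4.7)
The plan is to prove (i) in three steps and then obtain (ii) by a dual argument. First, tensoriality of the map $(\eta,\xi)\mapsto(0,\Phi\eta)$ in $C^\infty(M)$ shows that \eqref{tractor prolongation formula} merely subtracts a $C^\infty(M)$-linear tensor from $\nabla^{\mathcal T}$, so $\hat\nabla^{\mathcal T}$ is a linear connection.

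Next, to verify \eqref{general theory}, note that the correction term lies entirely in the $E$-summand, so the $F$-component of $\hat\nabla^{\mathcal T}\mu$ equals that of $\nabla^{\mathcal T}\mu$. Hence $\hat\nabla^{\mathcal T}\mu=0$ is equivalent to $\nabla^{\mathcal T}\mu\in\Omega^1(M,E)\subseteq\Omega^1(M,\mathcal T)$, which by \cref{normal solutions} happens precisely when $\mu=L\,\eta$ and $D\,\eta=0$. Under these conditions, \cref{Phi}(i) identifies $\nabla^{\mathcal T}(L\,\eta)$ with exactly the correction term $(0^{B'},\Phi{}^A_{A'}{}^B_{I'}\eta^{I'})$, so subtracting it annihilates $\hat\nabla^{\mathcal T}\mu$; the converse implication is just reading this backwards.

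The third step identifies $\hat\nabla^{\mathcal T}$ with the tractor connection of $\hat\omega=\omega-\Phi$. Applying formula \eqref{tractor connection general formula} to both $\omega$ and $\hat\omega$ and subtracting, the two tractor connections differ by the action of $\hat\omega-\omega=-\Phi$ on the standard representation $\mathbb R^{n+2}$ of $\mathfrak g$. The block description in \eqref{gradingAG} shows that $\mathfrak g_1\cong E\otimes F^*$ annihilates the $E$-summand of $\mathcal T\cong F\oplus E$ (under the Weyl splitting) and sends $\eta\in F$ to $E$ by contraction of its $F^*$-factor with $\eta$; writing this in indices reproduces exactly the correction $-(0^{B'},\Phi{}^A_{A'}{}^B_{I'}\eta^{I'})$ in \eqref{tractor prolongation formula}.

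Part (ii) follows mutatis mutandis using the standard cotractor: the correction has vanishing $E^*$-component, and \cref{Phi}(ii) together with the cotractor version of \cref{normal solutions} yields \eqref{general theory cotractor}. The dual action of $\mathfrak g_1$ on $(\mathbb R^{n+2})^*$ annihilates $F^*$ and sends $E^*$ to $F^*$, which flips the overall sign and matches $\tilde\omega-\omega=+\Psi$. The main obstacle I expect is the sign and index bookkeeping in this final step: one must carefully track the Weyl-structure identifications $\mathfrak g_1\cong T^*M\cong E\otimes F^*$ together with the dual module structure when passing from the tractor to the cotractor case, in order to obtain the stated opposite signs in parts (i) and (ii).
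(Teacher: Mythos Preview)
Your argument is correct and follows the same route as the paper's proof, which likewise invokes \cref{Phi} for one direction of \eqref{general theory}, \cref{normal solutions} for the other, and a direct computation with \eqref{tractor connection general formula} for the Cartan-connection identification. One small wording issue: the sentence ``Hence $\hat\nabla^{\mathcal T}\mu=0$ is equivalent to $\nabla^{\mathcal T}\mu\in\Omega^1(M,E)$'' does not follow from the $F$-component observation alone---only the forward implication does---and the reverse implication is only justified once you invoke \cref{Phi}(i) in the next sentence, so you should rephrase that step as a one-way implication.
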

\begin{proof}
If $\eta\in\Gamma(F)$ such that $D\,\eta=0$, it follows from \cref{Phi} that $\hat\nabla^{\mathcal T}(L\,\eta)=0$. Conversely, let 
\[\mu=(\eta,\xi)\in\Gamma(F\oplus E)\cong\Gamma(\mathcal T).\] If $\hat\nabla^{\mathcal T}\mu=0$, then \[\nabla^{\mathcal T}\mu\in\Omega^1(M,E)\subseteq \Omega^1(M,\mathcal T).\]
This implies that $\mu=L\,\eta$ and $D\,\eta=0$; see \cref{normal solutions}. Moreover, that $\hat\nabla^{\mathcal T}$ is the tractor connection of $\hat\omega$ follows from a direct computation using \eqref{tractor connection general formula}. This proves (i). Then (ii) is proved in an analogous way.
\end{proof}
\begin{remark}\label[remark]{general algorithm}
The authors of \cite{HSSS12A} have introduced a normality condition on prolongation connections associated to first BGG operators and proved that there is a unique such connection on a tractor bundle. Precisely, let $(\mathcal V,\nabla^{\mathcal V})$ be any tractor bundle on a manifold $M$ with associated normal tractor connection. Since the space of linear connections $\hat\nabla^{\mathcal V}$ on $\mathcal V$ is an affine space over $T^*M\otimes L(\mathcal V,\mathcal V)$, $\hat\nabla^{\mathcal V}$ is uniquely determined by the difference
\[\hat\nabla^{\mathcal V}-\nabla^{\mathcal V}\in\Omega^1(M,L(\mathcal V,\mathcal V)).\]
Note that the natural filtration in $\mathcal V$ and the natural filtration $T^*M=T^*M^{(1)}\supseteq T^*M^{(2)}\supseteq\cdots$ together induce a homogeneity filtration in $\Omega^1(M,L(\mathcal V,\mathcal V))$. Recall from \cite{HSSS12A}*{Section 1} that if $\hat\nabla^{\mathcal V}$ is a prolongation connection, then the homogeneiry of $\hat\nabla^{\mathcal V}-\nabla^{\mathcal V}$ is at least $1$. Conversely, it is proven that there exists a unique linear connection $\hat\nabla^{\mathcal V}$ such that $\hat\nabla^{\mathcal V}-\nabla^{\mathcal V}$ has homogeneity at least $1$ and for all $\mu\in\Gamma(\mathcal V)$, there holds
\begin{align*}
\hat\nabla^{\mathcal V}\mu-\nabla^{\mathcal V}\mu&\in\Gamma(\mathrm{im}\,\partial^*_{\mathcal V})\subseteq\Omega^1(M,\mathcal V)\quad\text{and}\\
R(\hat\nabla^{\mathcal V})\,\mu&\in\Gamma(\ker\,\partial^*_{\mathcal V})\subseteq\Omega^2(M,\mathcal V).
\end{align*}
Moreover, $\hat\nabla^{\mathcal V}$ is a prolongation of the first BGG operator. Here, $R(\,\cdot\,)$ denotes the curvature of a given linear connection and $\partial^*_{\mathcal V}$ is the Lie algebra boundary operator of $\Omega^\bullet(M,\mathcal V)$ as recalled in \eqref{codiff}. Additionally, the relation \cite{HSSS12A}*{Lemma 2.3} between $\hat\nabla^{\mathcal V}\mu-\nabla^{\mathcal V}\mu$ and $R(\hat\nabla^{\mathcal V})\,\mu-R(\nabla^{\mathcal V})\,\mu$ is closely analogous to the relation \cite{Book}*{Proposition 3.1.10} between the change of Cartan connection and the change of Cartan curvature on a given parabolic geometry. This leads the authors of cite{HSSS12A} to a general procedure \cite{HSSS12B}*{Subsection 1.4} for computing the normal prolongation connection. It has been stated in \cite{HSSS12B}*{Examples 7.1} that in the case $\mathcal V\in\{\mathcal T,\mathcal T^*\}$ of standard (co)tractor bundle of an AG-structure $(M,E,F)$ of type $(2,n)$, the normal prolongation connection is determined by
\begin{align*}
\hat\nabla^{\mathcal V}\mu-\nabla^{\mathcal V}\mu=-\square^{-1}_{\mathcal V}\,\partial^*_{\mathcal V}\, d^{\nabla^{\mathcal V}}\, d^{\nabla^{\mathcal V}}\,\mu.    
\end{align*}
Here, $d^{\nabla^{\mathcal V}}$ is the covariant exterior derivative of the tractor connection $\nabla^{\mathcal V}$, \[\square_{\mathcal V}=\partial_{\mathcal V}\,\partial^*_{\mathcal V}+\partial^*_{\mathcal V}\,\partial_{\mathcal V}\] is the Kostant Laplacian where $\partial_{\mathcal V}$ is the Lie algebra differential that is adjoint to $\partial^*_{\mathcal V}$; see, e.g., \cite{Book}*{Subsections 1.3.1 and 3.3.1}. Note that there holds
\[\square_{\mathcal V}=\partial^*_{\mathcal V}\,\partial_{\mathcal V}\]
on $\Omega^1(M,\mathcal V)$ and
\[d^{\nabla^{\mathcal V}}\, d^{\nabla^{\mathcal V}}\,\mu=R(\nabla^{\mathcal V})\,\mu=\kappa\cdot\mu\]
where $\kappa\in\Omega^2(M,L(\mathcal V,\mathcal V))$ is the normal Cartan curvature of the geometric structure applied to $\mathcal V$; see, e.g., \cite{Book}*{Corollary 1.5.7}.

Assume that a Weyl structure has been chosen. In the case where $\mathcal V=\mathcal T$ is the standard tractor bundle of $(M,E,F)$, the relevant (co)differentials are given by
\begin{align*}
\partial^*_{\mathcal T}:\Omega^2(M,F\oplus E)&\to \Omega^1(M,F\oplus E)\\
(\beta'{}^A_{A'}{}^B_{B'}{}^{C'},\beta{}^A_{A'}{}^B_{B'}{}^{C})&\mapsto(0^{B'},2\,\beta'{}^A_{A'}{}^B_{I'}{}^{I'})=\partial^*_{\mathcal T}(\beta',\beta){}^A_{A'}{}^{B'B}\quad\text{and}\\
\partial_{\mathcal T}:\Omega^1(M,F\oplus E)&\to \Omega^2(M,F\oplus E)\\
(\alpha'{}^A_{A'}{}^{B'},\alpha{}^A_{A'}{}^{B})&\mapsto(\delta{}^{C'}_{A'}\,\alpha{}^B_{B'}{}^A-\delta{}^{C'}_{B'}\,\alpha{}^A_{A'}{}^B,0^C)=\partial_{\mathcal T}(\alpha',\alpha){}^A_{A'}{}^B_{B'}{}^{C'C}.
\end{align*}
In particular, for all $\alpha{}^A_{A'}{}^B\in\Omega^1(M,E)$, there holds
\begin{align*}
(\square_{\mathcal T}\,\alpha){}^A_{A'}{}^B
=2\,(\delta{}^{I'}_{A'}\,\alpha{}^B_{I'}{}^A-\delta{}^{I'}_{I'}\,\alpha{}^A_{A'}{}^B)
=2\,\alpha{}^B_{A'}{}^A-2\,n \,\alpha{}^A_{A'}{}^B   
\end{align*}
and hence
\begin{align*}
(\square_{\mathcal T}^{-1}\,\alpha){}^A_{A'}{}^B=-\frac{1}{2\,n-2} \,\alpha{}^({}^{A}_{A'}{}^{B}{}^)-\frac{1}{2\,n+2} \,\alpha{}^[{}^{A}_{A'}{}^{B}{}^].  
\end{align*}
Inserting $(\eta^{A'},\xi^A)\in\Gamma(F\oplus E)\cong\Gamma(\mathcal T)$, we obtain the following relation.
\begin{align*}
d^{\nabla^{\mathcal T}}\, d^{\nabla^{\mathcal T}}\,(\eta,\xi){}^A_{A'}{}^B_{B'}{}^{C'C}
&=\kappa{}^A_{A'}{}^B_{B'}\,(\eta^{C'},\xi^C)=(\tau{}^{A}_{A'}{}^{B}_{B'}{}^{C'}_I\xi^I+W'{}^{A}_{A'}{}^{B}_{B'}{}^{C'}_{I'}\eta^{I'}, W{}^{A}_{A'}{}^{B}_{B'}{}^{C}_I\xi^I+Y{}^{A}_{A'}{}^{B}_{B'}{}^{C}_{I'}\eta^{I'}),\\
\partial^*_{\mathcal T}\,d^{\nabla^{\mathcal T}}\, d^{\nabla^{\mathcal T}}\,(\eta,\xi){}^A_{A'}{}^{B}&=2\,W'{}^{A}_{A'}{}^{B}_{J'}{}^{J'}_{I'}\,\eta^{I'}=2\,tr(W){}^{A}_{A'}{}^{B}_{I'}\,\eta^{I'},\\
\square_{\mathcal T}^{-1}\,\partial^*_{\mathcal T}\,d^{\nabla^{\mathcal T}}\, d^{\nabla^{\mathcal T}}\,(\eta,\xi){}^A_{A'}{}^{B}&=-\frac{1}{n-1} \,tr(W){}^{(A}_{(A'}{}^{B)}_{I')}\,\eta^{I'}-\frac{1}{n+1} \,
tr(W){}^{[A}_{[A'}{}^{B]}_{I']}\,\eta^{I'}=\Phi{}^A_{A'}{}^{B}_{I'}\,\eta^{I'};
\end{align*}
see \cref{Phi}. In particular, applying the above general procedure results in the prolongation connection defined in \cref{prolongation connection} (i).

Similarly, in the case where $\mathcal V=\mathcal T^*$ is the standard cotractor bundle of $(M,E,F)$, the relevant (co)differentials are given by
\begin{align*}
\partial^*_{\mathcal T^*}:\Omega^2(M,E^*\oplus F^*)&\to \Omega^1(M,E^*\oplus F^*)\\
(\beta{}^A_{A'}{}^B_{B'}{}_C,\beta'{}^A_{A'}{}^B_{B'}{}_{C'})&\mapsto(0_{B},-2\,\beta{}^A_{A'}{}^I_{B'}{}_{I})=\partial^*_{\mathcal T^*}(\beta,\beta'){}^A_{A'}{}_{BB'}\quad\text{and}\\
\partial_{\mathcal T^*}:\Omega^1(M,E^*\oplus F^*)&\to \Omega^2(M,E^*\oplus F^*)\\
(\alpha{}^A_{A'}{}_{B},\alpha'{}^A_{A'}{}_{B'})&\mapsto(-\delta{}^{A}_{C}\,\alpha'{}^B_{B'}{}_{A'}+\delta{}^{B}_{C}\,\alpha{}^A_{A'}{}_{B'},0_C')=\partial_{\mathcal T^*}(\alpha,\alpha'){}^A_{A'}{}^B_{B'}{}_{CC'}.
\end{align*}
In particular, for all $\alpha'{}^A_{A'}{}_{B'}\in\Omega^1(M,F^*)$, there holds
\begin{align*}
(\square_{\mathcal T^*}\,\alpha'){}^A_{A'}{}_{B'}=-2\,(-\delta{}^{A}_{I}\,\alpha'{}^I_{B'}{}_{A'}+\delta{}^{I}_{I}\,\alpha'{}^A_{A'}{}_{B'})
=2\,\alpha'{}^A_{B'}{}_{A'}-4 \,\alpha'{}^A_{A'}{}_{B'}   
\end{align*}
and hence
\begin{align*}
(\square_{\mathcal T^*}^{-1}\,\alpha'){}^A_{A'}{}_{B'}=-\frac{1}{2} \,\alpha{}_({}^{A}_{A'}{}_{B'}{}_)-\frac{1}{6} \,\alpha{}_[{}^{A}_{A'}{}_{B'}{}_].  
\end{align*}
Inserting $(\varphi_A,\mu_{A'})\in\Omega^1(E^*\oplus F^*)\cong\Gamma(\mathcal T^*)$, we obtain the following relation.
\begin{align*}
d^{\nabla^{\mathcal T^*}}\, d^{\nabla^{\mathcal T^*}}\,(\varphi,\mu){}^A_{A'}{}^B_{B'}{}_{CC'}
&=(-W{}^{A}_{A'}{}^{B}_{B'}{}^I_C\,\varphi_I-\tau{}^{A}_{A'}{}^{B}_{B'}{}^{I'}_C\,\mu_{I'},-Y{}^{A}_{A'}{}^{B}_{B'}{}^I_{C'}\,\varphi_I-W'{}^{A}_{A'}{}^{B}_{B'}{}^{I'}_{C'}\,\mu_{I'}),\\
\partial^*_{\mathcal T^*}\,d^{\nabla^{\mathcal T^*}}\, d^{\nabla^{\mathcal T^*}}\,(\varphi,\mu){}^A_{A'}{}_{B'}&=2\,W{}^{A}_{A'}{}^{J}_{B'}{}^{I}_{J}\,\varphi_I=2\,tr(W){}^{A}_{A'}{}^{I}_{B'}\,\varphi_I,\\
\square_{\mathcal T^*}^{-1}\,\partial^*_{\mathcal T^*}\,d^{\nabla^{\mathcal T^*}}\, d^{\nabla^{\mathcal T^*}}\,(\varphi,\mu){}^A_{A'}{}_{B'}&=-tr(W){}^{(A}_{(A'}{}^{I)}_{B')}\,\varphi_I-\frac{1}{3} \,
tr(W){}^{[A}_{[A'}{}^{I]}_{B']}\,\varphi_I=\Psi{}^A_{A'}{}^{I}_{B'}\,\varphi^{I'};
\end{align*}
see \cref{Phi}. In particular, applying the above general procedure results in the prolongation connection defined in \cref{prolongation connection} (ii).

The above computation has also been made in \cite{HSSS12B}*{Example 7.1}. Note that the contraction of the torsion quadratic they use corresponds to $-tr(\iota_\tau\tau){}^A_{A'}{}^B_{B'}$ and the contraction of Weyl tensor they use corresponds to $-tr(W){}^B_{A'}{}^A_{B'}=-tr(W){}^{(A}_{(A'}{}^{B)}_{B')}+tr(W){}^{[A}_{[A'}{}^{B]}_{B']}$. In addition, their convention on torsion, Rho tensor and abstract index notation also differ from ours. We point out that their result on the prolongation cotractor, i.e., the formula for $\tilde \nabla_c G_\beta$ in \cite{HSSS12B}*{Example 7.1} is misprinted and should read
\begin{align*}
\tilde\nabla_c G_\beta=\nabla_c G_\beta-X{}^\omega_{D'}X{}^D_\beta\left[S{}_({}^{D'}_C{}^{C'}_D{}_)
+\frac{1}{3}\,S{}_[{}^{D'}_C{}^{C'}_D{}_]\right]\,G_\omega
\end{align*}
where
\begin{align*}
S{}^{A'}_A{}^{B'}_B=U{}^{R'}_A{}^{A'}_B{}^{B'}_{R'}=U{}^{A'}_R{}^{B'}_A{}^{R}_{B}=\frac 1 q\, T{}^{\ \,(A'|e|}_{r\,(A}\,T{}^{B')\ \,r}_{B\,)\,e}-\frac{1}{q+4}\,T{}^{\ \,[A'|e|}_{r\,[A}\,T{}^{B']\ \,r}_{B\,]\,e}.
\end{align*}
\end{remark}
\begin{remark}\label[remark]{one jet}
By \eqref{general theory} in \cref{prolongation connection} we may identify the solution space of \[D:\Gamma(F)\to\Gamma(E\otimes\mathfrak{sl}(F))\] with the space of parallel sections of $\hat\nabla^{\mathcal T}$. Assume in addition that $M$ is connected. Then a parallel section of $\hat\nabla^{\mathcal T}$ is uniquely determined by its value in a point via the parallel transport. In particular, the space of parallel sections, i.e., the solution space of $D$ is a vector space of dimension at most $(n+2)$. Moreover, every solution of $D$ is completely determined by its 1-jet at a point. Indeed, if $\eta$ and $\tilde\eta$ are solutions of $D$ such that $J^1_x\,\eta=J^1_x\,\tilde\eta$ for some $x\in M$, then $L\,\eta$ agrees with $L\,\tilde\eta$ at $x$, hence everywhere in $M$. Hence $\eta=\tilde\eta$. 

Analogous conclusions hold for the first BGG operator $D:\Gamma(E^*)\to\Gamma(F^*\otimes\mathfrak{sl}(E))$ associated to the standard cotractor bundle $\mathcal T^*$.    
\end{remark}

\begin{remark}\label{curved orbit}
Recall the curved orbit theory for Cartan geometry: Consider a Cartan geometry $(\mathcal G\to M,\omega)$ of type $(G,P)$ with curvature $\kappa\in\Omega^2(\mathcal G,\mathfrak g)$ and a tractor bundle $\mathcal V=\mathcal G\times_P\mathbb V$ associated to a $G$-representation $\mathbb V$. Denote by $\mathbb V=\dot{\cup}\,\mathcal O_\alpha$ the partition of $\mathbb V$ into its $P$-orbits. Then any section $\mu$ of $\mathcal V$, viewed as a $P$-equivariant map $\mu:\mathcal G\to\mathbb V$, induces a $P$-invariant partition 
\[\mathcal G=\dot{\cup}\,\mu^{-1}(\mathcal O_\alpha)=\dot{\cup}\,\mathcal G_\alpha\] that projects to $M=\dot{\cup}\,M_\alpha$. Viewing $\mu$ as a map $M\to \mathcal V$, there holds
\[M_\alpha=\mu^{-1}(\mathcal G\times_P\mathcal O_\alpha).\]
Assume in addition that $\mu$ is parallel. Then every $M_\alpha$ is called the curved orbit associated to the $P$-orbit $\mathcal O_\alpha\subseteq\mathbb V$; see \cite{CGH}*{Definition 2.4} and the paragraph succeeding it. Choose a base point $v_\alpha\in\mathcal O_\alpha$ for each $P$-orbit and denote by 
\[G_\alpha=\{g\in G:g\,v_\alpha=v_\alpha\}\] the corresponding isotropy subgroup of $G$. Then $\omega|_{\mathcal G_\alpha}$ takes values in the Lie algebra $\mathfrak g_\alpha$ of $G_\alpha$ and hence
\[(\mathcal G_\alpha\to M_\alpha,\omega|_{\mathcal G_\alpha})\]
is a Cartan geometry of type $(G_\alpha,G_\alpha\cap P)$ with curvature $\kappa_\alpha=\kappa|_{\mathcal G_\alpha}\in\Omega^2(\mathcal G_\alpha,\mathfrak g_\alpha)$; see \cite{CGH}*{Theorem 2.6 (ii) and (iii)}. Moreover, the pair $M_\alpha\subseteq M$ is locally diffeomorphic to the pair $(G_\alpha\,P)/P\subseteq G/P$. Precisely, for every $x\in M_\alpha$ there exists a local diffeomorphism
\[M\supseteq U\xrightarrow[\varphi]{\cong}U'\subseteq G/P\]
such that $U$ is an open subset of $M$ containing $x$, $U'$ is an open subset of $G/P$ containing $eP$ and there holds $\varphi(U\cap M_\alpha)=U'\cap (G_\alpha\,P)/P$; see \cite{CGH}*{Theorem 2.6 (i)}. In particular, since $(G_\alpha\,P)/P\subseteq G/P$ is initial and hence immersive, $M_i\subseteq M$ is also initial and hence immersive; see \cite{CGH}*{Theorem 2.6 (i)}. If $(G_\alpha\,P)/P\subseteq G/P$ is in addition regular, i.e., embedded, then $M_i\subseteq M$ is also an embedded submanifold. In fact, this is the case of all examples in \cite{CGH}.

Since the $P$-orbit $\{0\}\subseteq\mathbb V$ has isotropy group $G$ and induces a curved orbit $\{\mu=0\}$ that is either empty or is the union of some connected components of $M$, we exclude this trivial situation by assuming that $M$ is connected and that $\mu$ is not the zero section.
\end{remark}
Consider now the standard tractor bundle $\mathcal T$ on an AG-structure $(M,E,F)$. In view of \cref{prolongation connection}, if $M$ is connected and $0\neq\eta\in\Gamma(F)$ is a solution of the BGG operator, then $L\,\eta\in\Gamma(\mathcal T)$ is parallel for the modified Cartan connection $\hat\omega$ and is nowhere-vanishing. The $P$-orbits of $\mathbb R^{n+2}\setminus\{0\}$ are precisely 
\begin{align*}
    \mathbb R^2\setminus\{0\}\quad\text{and}\quad
    \mathbb R^2\times(\mathbb R^n\setminus\{0\}).
\end{align*}
As recalled in \cref{curved orbit} above, applying $(L\,\eta)^{-1}(\mathcal G\times_P\,\cdot\,)$ to each $P$-orbit gives rise to the curved orbit decomposition of $M$ into
\begin{equation}\label{curved orbit spaces}
\begin{aligned}
\{x\in M:(L\,\eta)(x)\in E_x\setminus\{0\}\}=
    \{x\in M:\eta(x)&=0\}\quad\text{and}\\
    \{x\in M:\eta(x)&\neq 0\}.
\end{aligned}\end{equation}
The case for the standard cotractor bundle is analogous. While one could investigate regularity and the geometric structure of each curved orbit from the general theory recalled in \cref{curved orbit}, we provide an alternative argument in standard geometric terms in the rest of the article.

\begin{corollary}\label[corollary]{open dense}
Notation as in \cref{splitting operator}. Assume in addition that $M$ is connected.

\begin{enumerate}
    \item [(i)]If $0\neq \eta\in\Gamma(F)$ with $D\,\eta=0$, then $\{x\in M:\eta(x)\neq 0\}$ is an open dense subset of $M$.
\item [(ii)] If $0\neq \varphi\in\Gamma(E^*)$ with $D\,\varphi=0$, then $\{x\in M:\varphi(x)\neq 0\}$ is an open dense subset of $M$.
\end{enumerate}
In particular, these open submanifolds inherit the AG-structure of type $(2,n)$ from $(M,E,F)$.
\end{corollary}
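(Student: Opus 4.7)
The plan is to combine \cref{prolongation connection} with the standard rigidity of parallel sections of a linear connection on a connected manifold. For (i), since $D\,\eta=0$, the section $L\,\eta\in\Gamma(\mathcal T)$ is $\hat\nabla^{\mathcal T}$-parallel by \eqref{general theory}. Because $\Pi(L\,\eta)=\eta\not\equiv 0$, $L\,\eta$ is not identically zero; and as a parallel section of a linear connection on the connected manifold $M$, it must then be nowhere vanishing, any zero of a parallel section propagating to the whole manifold by parallel transport (as already used in \cref{one jet}). The set $\{x\in M:\eta(x)\ne 0\}$ is thus open by continuity of $\eta$; it remains only to establish density.

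To prove density I would argue by contradiction: assume the closed set $N=\{x\in M:\eta(x)=0\}$ has nonempty interior $V$. On $V$ the formula of \cref{splitting operator} gives $L\,\eta|_V=(0,\xi)_\sigma$ for some $\xi\in\Gamma(E|_V)$. Since the correction term in \eqref{tractor prolongation formula} is proportional to $\eta$ and therefore vanishes on $V$, the condition $\hat\nabla^{\mathcal T}(L\,\eta)=0$ reduces on $V$ to $\nabla^{\mathcal T}(0,\xi)_\sigma=0$, whose $F$-component is $\xi^A\,\delta^{B'}_{A'}$. For this to vanish identically in $(A,A',B')$ (e.g.\ setting $B'=A'$) one must have $\xi=0$ on $V$, so $L\,\eta|_V\equiv 0$, contradicting that $L\,\eta$ is nowhere vanishing on $M$. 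Hence $V=\emptyset$ and $\{x\in M:\eta(x)\ne 0\}$ is dense.

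For (ii) the plan is completely analogous, with $\hat\nabla^{\mathcal T}$ and $L\,\eta$ replaced by $\tilde\nabla^{\mathcal T^*}$ and $L\,\varphi$ via \cref{prolongation connection}(ii): a hypothetical interior point of $\{\varphi=0\}$ forces $L\,\varphi|_V=(0,\mu)^\sigma$, and the $E^*$-component $-\mu_{A'}\,\delta^{A}_B$ of $\nabla^{\mathcal T^*}(0,\mu)^\sigma$ vanishing gives $\mu=0$, again contradicting that $L\,\varphi$ has no zeros. The final ``in particular'' clause is immediate: an AG-structure restricts to any open submanifold by restricting $(E,F)$, the defining isomorphisms of \eqref{AG isomorphisms} persisting. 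The only subtle point in the whole argument is the observation that no nonzero section of the subbundle $E\subseteq\mathcal T$ (respectively $F^*\subseteq\mathcal T^*$) can be $\hat\nabla^{\mathcal T}$- (respectively $\tilde\nabla^{\mathcal T^*}$-) parallel, which one reads directly off a single off-diagonal Kronecker-delta term in the tractor connection formulae; I do not expect any deeper obstacle.
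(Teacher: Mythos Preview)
Your argument is correct and follows essentially the same route as the paper: both hinge on the fact (from \cref{prolongation connection}) that $L\,\eta$ is parallel for $\hat\nabla^{\mathcal T}$ and hence, being nonzero, nowhere vanishing on the connected manifold $M$---the paper phrases this via \cref{one jet} as $J^1_x\eta\neq 0$ for all $x$, which is the same statement. Your density step is correct but longer than necessary: once $\eta|_V\equiv 0$ on an open set $V$, one has $\nabla\eta|_V\equiv 0$ automatically, so $L\,\eta|_V=(\eta,-\tfrac{1}{n}\,tr(\nabla\eta))|_V=0$ directly from the formula in \cref{splitting operator}, without needing to analyse the tractor connection on $V$.
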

\begin{proof}
We prove (i). Then (ii) follows in analogous steps. It is clear that $\{x\in M:\eta(x)\neq 0\}$ is open in $M$. Moreover, it follows from \cref{surjectivity} or, equivalently, \cref{one jet} that $J^1_x\eta\neq 0$ whenever $\eta(x)=0$. Hence $\{x\in M:\eta(x)\neq 0\}$ is dense in $M$.
\end{proof}

In addition to \cref{splitting operator} and \cref{normal solutions}, we provide a characterization of all nowhere vanishing solutions to the first BGG operator $D$.
\begin{proposition}\label{1st BGG operator}
A nowhere vanishing section of $F$, respectively, of $E^*$, is a solution to $D(\,\cdot\,)=0$ where $D$ is the first BGG operator of $\mathcal T$, respectively, of $\mathcal T^*$, if and only if it is parallel for some Weyl connection.
\end{proposition}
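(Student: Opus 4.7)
The plan is to reduce the statement to the explicit form $D(\,\cdot\,)=(\nabla\,\cdot\,)_o$ from \cref{splitting operator} together with the transformation rules for Weyl connections recalled in \cref{Tractor calculus for almost Grassmann structures}. The ``if'' direction is immediate: a Weyl connection $\nabla$ with $\nabla\eta=0$ (respectively $\nabla\varphi=0$) also makes the trace-free part, namely $D\eta$ (respectively $D\varphi$), vanish.

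For the ``only if'' direction, suppose $\eta\in\Gamma(F)$ is nowhere vanishing and $D\eta=0$. Fix an arbitrary Weyl structure $\sigma$ with Weyl connection $\nabla$. The hypothesis means $\nabla\eta$ is pure trace, so
\[\nabla^A_{A'}\eta^{B'}=-v^A\,\delta^{B'}_{A'},\qquad v^A:=-\tfrac{1}{n}\nabla^A_{I'}\eta^{I'}\in\Gamma(E).\]
For a second Weyl structure $\hat\sigma$ encoded by $\Upsilon\in\Omega^1(M)=\Gamma(E\otimes F^*)$, the transformation formula gives $\hat\nabla^A_{A'}\eta^{B'}-\nabla^A_{A'}\eta^{B'}=\Upsilon^A_{I'}\eta^{I'}\,\delta^{B'}_{A'}$, so $\hat\nabla\eta=0$ if and only if $\Upsilon^A_{I'}\eta^{I'}=v^A$. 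Since $\eta$ is nowhere vanishing, I choose a smooth $\beta\in\Gamma(F^*)$ with $\beta_{I'}\eta^{I'}=1$ (locally extend $\eta$ to a frame and take the corresponding dual covector, then patch with a partition of unity) and set $\Upsilon:=v\otimes\beta$, i.e., $\Upsilon^A_{A'}:=v^A\beta_{A'}$. The associated Weyl connection $\hat\nabla$ then annihilates $\eta$, as required.

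The cotractor case is entirely analogous: with $\varphi\in\Gamma(E^*)$ nowhere vanishing and $D\varphi=0$, one has $\nabla^A_{A'}\varphi_B=-u_{A'}\,\delta^A_B$ for $u_{A'}:=-\tfrac{1}{2}\nabla^I_{A'}\varphi_I\in\Gamma(F^*)$, and the transformation rule on $E^*$ reduces the problem to solving $\Upsilon^I_{A'}\varphi_I=u_{A'}$, which is done by $\Upsilon:=\beta\otimes u$ for any $\beta\in\Gamma(E)$ with $\beta^I\varphi_I=1$. No step is a real obstacle; the only non-calculational point is the global existence of the smooth retraction $\beta$, which follows immediately from the pointwise non-vanishing hypothesis on $\eta$ (respectively $\varphi$).
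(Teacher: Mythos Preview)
Your argument is correct and follows essentially the same route as the paper: both proofs use the formula $D\eta=(\nabla\eta)_o$ from \cref{splitting operator}, the transformation rule $\hat\nabla^A_{A'}\eta^{B'}=\nabla^A_{A'}\eta^{B'}+\Upsilon^A_{I'}\eta^{I'}\delta^{B'}_{A'}$, and then solve the trace condition $\Upsilon^A_{I'}\eta^{I'}=-\tfrac{1}{n}\nabla^A_{I'}\eta^{I'}$ using that $\eta$ is nowhere vanishing. The only difference is cosmetic: the paper simply asserts that the set $\{\Upsilon:\Upsilon(\eta)=-\tfrac{1}{n}\operatorname{tr}(\nabla\eta)\}$ is nonempty, whereas you exhibit an explicit element $\Upsilon=v\otimes\beta$ via a smooth dual section $\beta$.
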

\begin{proof}
Let $\eta\in\Gamma(F)$. With respect to the decomposition $T^*M\otimes F=E\otimes\mathfrak{sl}(F)\oplus E$, the formula
$\hat\nabla^A_{A'}\eta^{B'}=\nabla^A_{A'}\eta^{B'}+\Upsilon^A_{I'}\,\eta^{I'}\,\delta^{B'}_{A'}$ in the beginning of the section decomposes to
\begin{align}\label{nabla eta}
    (\hat\nabla\eta)_o=(\nabla\eta)_o\quad\text{and}\quad\frac 1 n\,\hat\nabla^A_{I'}\eta^{I'}=\frac 1 n\,\nabla^A_{I'}\eta^{I'}+\Upsilon^A_{I'}\,\eta^{I'}.
\end{align}
In particular, 
\[\{\Upsilon\in\Omega^1(M):\Upsilon(\eta)=-\frac 1 n\,tr(\nabla\eta)\}\]
determines a collection of Weyl structures $\hat\sigma$ whose Weyl connections $\hat\nabla$ has the property that $\hat\nabla\eta=D\,\eta$. Similarly, for every $\varphi\in\Gamma(E^*)$ there is a collection of Weyl structures with Weyl connections $\hat\nabla$ such that $\hat\nabla\varphi=D\,\varphi$.
\end{proof}

Now we move on to study the other space defined in \eqref{curved orbit spaces}, namely, the zero locus $\{x\in M:\eta(x)=0\}$ of a nonzero solution of $D$ and its cotractor analog. First, we develop in \cref{level set} a vector bundle version of the regular level set theorem. Precisely, we consider any vector bundle $V\to M$ on any manifold $M$ with no additional assumption and characterize every section $\sigma\in\Gamma(V)$ that is a defining section for embedded submanifolds of $M$; see, e.g., \cite{CGH21}*{Definition 2.16}.

\begin{lemma}\label[lemma]{level set}
    Let $V\to M$ be a vector bundle of rank $k$ on a manifold $M$ and $\nabla$ a linear connection on $V$. If $\sigma\in\Gamma(V)$ is a section such that
    \[N=\{x\in M:\sigma(x)=0\}\]
    is nonempty and
    \[\nabla\sigma(x):T_xM\to V_x\]
    is surjective for every $x\in N$, then $N$ is a regular, i.e., embedded, submanifold of $M$ with codimension $k$. Moreover, there holds
    \[T_xN=\ker(\nabla\sigma(x))\subseteq T_xM\]
    for every $x\in N$.   
\end{lemma}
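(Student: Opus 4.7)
The plan is to reduce this statement to the classical regular value theorem via a local trivialization of $V$, exploiting the fact that at a zero of $\sigma$ the covariant derivative $\nabla\sigma(x)$ is independent of the connection.

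First, I would observe that if $\nabla$ and $\nabla'$ are two connections on $V$, then their difference $\nabla - \nabla' \in \Gamma(T^*M \otimes \mathrm{End}(V))$ acts tensorially, so $(\nabla\sigma - \nabla'\sigma)(x) = (\nabla - \nabla')(x)(\sigma(x)) = 0$ whenever $\sigma(x) = 0$. Hence at each $x \in N$, the linear map $\nabla\sigma(x) : T_xM \to V_x$ depends only on the $1$-jet of $\sigma$ at $x$, not on $\nabla$.

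Now fix $x \in N$ and choose an open neighborhood $U \subseteq M$ of $x$ on which $V$ admits a local frame $e_1, \ldots, e_k$. Using this frame, $\sigma|_U$ corresponds to a smooth map $f = (f^1, \ldots, f^k) : U \to \mathbb{R}^k$ with $\sigma|_U = \sum_i f^i\, e_i$, and $N \cap U = f^{-1}(0)$. The trivial connection $\nabla^{\mathrm{triv}}$ determined by declaring $e_1, \ldots, e_k$ to be parallel satisfies $\nabla^{\mathrm{triv}}\sigma|_U = \sum_i df^i \otimes e_i$, so identifying $V_x \cong \mathbb{R}^k$ via the frame, we get $\nabla^{\mathrm{triv}}\sigma(x) = df(x)$. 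By the first observation, $\nabla\sigma(x) = \nabla^{\mathrm{triv}}\sigma(x) = df(x)$, so the hypothesis says that $0$ is a regular value of $f$ at $x$. Shrinking $U$ so that $df$ remains surjective on $f^{-1}(0) \cap U$, the classical regular value theorem yields that $N \cap U = f^{-1}(0)$ is an embedded submanifold of $U$ of codimension $k$, with tangent space $T_x(N \cap U) = \ker df(x) = \ker \nabla\sigma(x)$.

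Since being an embedded submanifold is a local condition and the local pieces $N \cap U$ agree on overlaps, these glue to exhibit $N$ as an embedded submanifold of $M$ of codimension $k$, and the tangent space identification $T_xN = \ker(\nabla\sigma(x))$ holds globally. There is no real obstacle here; the only subtlety worth highlighting is the connection-independence of $\nabla\sigma(x)$ at zeros of $\sigma$, which is what makes the statement well-posed and reduces it to a frame computation.
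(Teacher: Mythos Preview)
Your proof is correct and follows essentially the same approach as the paper: both localize via a frame, write $\sigma = \sum_i f^i e_i$, and reduce to the classical regular value theorem by observing that at a zero of $\sigma$ the covariant derivative becomes $\sum_i df^i(x)\, e_i(x)$. The only difference is presentational: you isolate the connection-independence of $\nabla\sigma(x)$ at zeros as a separate observation and then pass to the trivial connection of the frame, whereas the paper simply expands $\nabla_\xi\sigma = \sum_i (df^i(\xi)\,e_i + f^i\,\nabla_\xi e_i)$ directly and notes that the second sum vanishes on $N$.
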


\begin{proof}
Observe that $N\subseteq M$ is an embedded submanifold if and only if there exists a covering $\mathcal U$ of $M$ consisting of open subsets of $M$ such that $N\cap U\subseteq U$ is an embedded submanifold for every $U\in\mathcal U$. Thus we may assume without loss of generality that $V$ admits a global frame $s_1,\dots,s_k\in\Gamma(V)$. We write $\sigma=\sum_{i=1}^k\sigma^i\,s_i$ where $\sigma^i\in C^\infty(M)$. For $\xi\in\mathfrak X(M)$, there holds
\begin{align*}
\nabla_\xi\sigma=\sum_{i=1}^k\left(d\,\sigma^i(\xi)\,s_i+\sigma^i\nabla_\xi s_i\right).
\end{align*}
Observe that $x\in N$ if and only if $x$ is in the zero locus of $(\sigma^1,\dots,\sigma^k)\in C^\infty(M,\mathbb R^k)$. In this case, $\nabla\sigma(x)=\sum_{i=1}^k d\sigma^i(x)\,s_i(x)$ is assumed to be surjective and thus $(\sigma^1,\dots,\sigma^n)$ is a submersion at the point $x$. In particular, it follows from the regular level set theorem given in, e.g., \cite{Lee}*{Corollary 5.14} that $N$ is an embedded submanifold of $M$ with codimension $k$. Moreover, it follows from \cite{Lee}*{Proposition 5.37} that for $x\in N$, the joint kernel of $\{d\,\sigma^i|_x:i=1,\dots,k\}$ lies in $T_xN$. That is, $\ker(\nabla\sigma(x))\subseteq T_xN$. Counting dimensions, we conclude that $\ker(\nabla\sigma(x))= T_xN$. This completes the proof of the lemma.
\end{proof}
We also recall the following geometric fact.
\begin{lemma}\label[lemma]{restriction}
Let $V\to M$ be a vector bundle on a manifold $M$. If $N\subseteq M$ is either an immersed submanifold or an embedded submanifold, then
\begin{align*}
    V|_N=\bigcup_{x\in N}V_x
\end{align*}
is a vector bundle on $N$. Moreover, every linear connection $\nabla^V$ on $V$ restricts to a linear connection $\nabla^{V|_N}$ on $V|_N$.
\end{lemma}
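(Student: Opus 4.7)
The plan is to recognize $V|_N$ as the pullback bundle $\iota^*V$ along the inclusion $\iota\colon N\hookrightarrow M$, which is a smooth map in either case (embedded or immersed). Concretely, I would work with local trivializations: pick an open cover $\{U_\alpha\}$ of $M$ over which $V$ trivializes, with transition functions $g_{\alpha\beta}\colon U_\alpha\cap U_\beta\to GL(k,\mathbb R)$. The sets $\{\iota^{-1}(U_\alpha)\}$ form an open cover of $N$ in its own smooth topology, and the obvious identification
\[
V|_N\big|_{\iota^{-1}(U_\alpha)}\cong \iota^{-1}(U_\alpha)\times\mathbb R^k
\]
is compatible with the transition functions $g_{\alpha\beta}\circ\iota$, which are smooth because $\iota$ is smooth. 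This endows $V|_N$ with the structure of a smooth rank-$k$ vector bundle over $N$, and the natural inclusion $V|_N\hookrightarrow V$ covers $\iota$ and is a fibrewise isomorphism.

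For the connection part, I would define $\nabla^{V|_N}$ pointwise. Given $x\in N$, $\xi\in T_xN$, and $s\in\Gamma(V|_N)$, choose a local frame of $V$ in a neighborhood of $\iota(x)$ and use it to produce a local extension $\tilde s\in\Gamma(V)$ of $s$ (more precisely, a local section with $\tilde s\circ\iota=s$ on a neighborhood of $x$ in $N$). Then set
\[
\bigl(\nabla^{V|_N}_\xi s\bigr)(x)\;:=\;\bigl(\nabla^V_{T_x\iota\cdot\xi}\tilde s\bigr)(\iota(x)).
\]
The key well-definedness point is that this value depends only on $\xi$ and on $s$, not on the chosen extension. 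This reduces to showing that if $\tilde s$ vanishes along $\iota(N)$ then the right-hand side vanishes: writing $\tilde s=\sum_i f^i\,s_i$ in a local frame $\{s_i\}$ of $V$, each $f^i$ vanishes on $\iota(N)$, so
\[
\bigl(\nabla^V_{T_x\iota\cdot\xi}\tilde s\bigr)(\iota(x))\;=\;\sum_i (T_x\iota\cdot\xi)(f^i)\,s_i(\iota(x))\;+\;\sum_i f^i(\iota(x))\,\bigl(\nabla^V_{T_x\iota\cdot\xi}s_i\bigr)(\iota(x)),
\]
and both sums vanish: the second because $f^i(\iota(x))=0$, and the first because $(T_x\iota\cdot\xi)(f^i)=\xi(f^i\circ\iota)=\xi(0)=0$ since $\xi$ is tangent to $N$.

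Once well-definedness is established, $C^\infty(N)$-linearity in $\xi$ and the Leibniz rule in $s$ follow automatically from the corresponding properties of $\nabla^V$ and from the construction via local extensions; smoothness of $\nabla^{V|_N}_\xi s$ in $x$ likewise follows because the construction can be carried out locally in any chart using a single extension. The only real obstacle is the well-definedness check above, and as shown it reduces to the essentially tautological observation that vectors tangent to $N$ annihilate functions vanishing on $\iota(N)$.
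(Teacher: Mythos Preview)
Your proposal is correct and takes essentially the same approach as the paper. The paper's own proof is much terser---it cites \cite[Example 10.8]{Lee} for the vector bundle claim and gives a one-line well-definedness check for the connection (if $s_1,s_2\in\Gamma(V)$ agree on $N$ then $\nabla^V s_i$ induce the same element of $\Omega^1(N,V|_N)$)---but your explicit pullback construction and local-frame computation simply unpack that same argument in detail.
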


\begin{proof}
That $V|_N$ is a vector bundle is precisely the statement of \cite{Lee}*{Example 10.8}. Now let $s_1,s_2\in\Gamma(V)$ be two sections such that $s_1=s_2$ on $N$. Clearly, $\nabla^{V}s_i$ induce the same one-form in $\Omega^1(N,V|_N)$, $i=1,2$.
\end{proof}

Now we are ready to prove that the zero locus of a BGG solution is an embedded submanifold and describe its tangent bundle.

\begin{theorem}[Regularity of the zero loci]\label[theorem]{zero locus}
Let $(M,E,F)$ be an AG-structure of type $(2,n)$ where the base space $M$ is connected.

\begin{enumerate}
\item [(i)] 
Assume that $0\neq\eta\in\Gamma(F)$ is a solution of $D$ such that 
\[N=\{x\in M:\eta(x)=0\}\]
is nonempty. Then $N$ is an $n$-dimensional embedded submanifold of $M$. The restriction $(L\,\eta)|_N\in\Gamma(E|_N)$ is nowhere vanishing with annihilator 
\[\ell=\ker\,((L\,\eta)|_N)\subseteq E^*|_N\]
of rank $1$. Moreover, the defining isomorphism $TM\cong E^*\otimes F$ of the AG-structure $(M,E,F)$ restricts to
\[TN\cong \ell\otimes F|_N.\]

\item [(ii)] Assume that $0\neq\varphi\in\Gamma(E^*)$ is a solution of $D$ such that 
\[\tilde N=\{x\in M:\varphi(x)=0\}\]
is nonempty. Then $\tilde N$ is a $2\,(n-1)$-dimensional embedded submanifold of $M$. The restriction $(L\,\varphi)|_{\tilde N}\in\Gamma(F^*|_N)$ is nowhere vanishing with kernel \[\tilde F=\ker\,((L\,\varphi)|_{\tilde N})\subseteq F|_N\]
of rank $(n-1)$. Moreover, the defining isomorphism $TM\cong E^*\otimes F$ of the AG-structure $(M,E,F)$ restricts to
\[T\tilde N\cong E^*|_{\tilde N}\otimes\tilde F.\]
\end{enumerate}

\end{theorem}

\begin{proof}Let $\eta$ satisfy the assumptions in (i). Recall from \cref{prolongation connection} (i) that $D\,\eta=0$ if any only if $L\,\eta$ is parallel with respect to the connection $\hat\nabla^{\mathcal T}$. Since $M$ is connected and $\eta\neq 0$, $(L\,\eta)^{A'A}=(\eta^{A'},-\frac 1 n \nabla{}^A_{I'}\eta^{I'})$ is nowhere vanishing. In particular, for all $x\in N$, $(L\,\eta)(x)$ is a nonzero element in $E_x$ with $1$-dimensional annihilator $\ell_x\in E^*_x$. Hence there holds
\[\ker(\nabla\eta(x))=\ell_x\otimes F_x;\]
see \cref{surjectivity}. Thus the statement of (i) follows from \cref{level set} and \cref{restriction}.

(ii) is proved in analogous steps. 
\end{proof}

Moreover, we describe the geometric structures on $N$ and $\tilde N$. Recall that a projective structure on a manifold $N$ is a projectively equivalence class of linear connections on $TN$. Two connections $\nabla,\hat\nabla$ on $TN$ are said to be projectively equivalent if there exists $\Upsilon\in\Omega^1(N)$ such that $\hat\nabla_{\xi_1}\xi_2-\nabla_{\xi_1}\xi_2=\Upsilon(\xi_1)\xi_2+\Upsilon(\xi_2)\xi_1$ for all $\xi_1,\xi_2\in\mathfrak X(N)$. It follows immediately that connections in the same projectively equivalence class have the same torsion, called the torsion of the projective structure.

\begin{theorem}[Geometric structures on the zero loci]\label[theorem]{canonical connections}
    Notation as in \cref{zero locus}. Denote by $\tau$ the torsion of $(M,E,F)$.
    \begin{enumerate}
        \item [(i)] Every Weyl connection $(\nabla^E,\nabla^F)$ of $(M,E,F)$ canonically induces a connection on $TN$. These connections on $TN$ define a projective structure on $N$. The torsion of this projective structure is given by the restriction of $\tau$ to $\Omega^2(N,TN)$.
        \item [(ii)] The nowhere vanishing section $(L\,\varphi)|_{\tilde N}\in\Gamma(F^*|_{\tilde N})$ canonically induces an isomorphism $\wedge^2\,E^*|_{\tilde N}\cong\wedge^{n-1}\,\tilde F$, up to a constant multiple. This isomorphism together with the isomorphism $T\tilde N\cong E^*|_{\tilde N}\otimes\tilde F$ obtained in \cref{zero locus} (ii) define an AG-structure of type $(2,n-1)$ on $(\tilde N,E|_{\tilde N},\tilde F)$.
    \end{enumerate}
\end{theorem}
\begin{remark}
From the statement of \cref{canonical connections} we see that rescaling a solution of one of the two first BGG operators by a nonzero constant does not change the geometric structure induced on its zero locus. Note that if two AG-structures differ only by a nonzero constant multiple on the defining line bundle isomorphism $\wedge^2\,E^*\cong \wedge^n\,F$, then their corresponding normal parabolic geometries are isomorphic.
\end{remark}
\begin{proof}[Proof of \cref{canonical connections}]
We first prove (i). Let $\nabla$ be a Weyl connection of $(M,E,F)$ and $s=(L\,\eta)|_N\in\Gamma(E|_N)$. It follows from \cref{prolongation connection} (i) and the formula of $\nabla^{\mathcal T}$ in the beginning of the section that
\[0=\hat\nabla^{\mathcal T|_N}s=\nabla^{\mathcal T|_N}s=\nabla^{E|_N}s.\]
Here, $\nabla^{E|_N}$ is the restriction of the Weyl connection $\nabla$; see \cref{restriction}. In particular, if $\xi\in\mathfrak X(N)$ and $\alpha\in\Gamma(E^*|_N)$ such that $\alpha(s)=0$, then $(\nabla^{E^*|_N}_\xi\alpha)(s)=0$. That is, $\nabla^{E^*|_N}$ preserves the subbundle $\ell\subseteq E^*|_N$. Together with the restriction of $\nabla$ to $F|_N$ we obtain a connection on $TN\cong\ell\otimes (F|_N)$. Let $\hat\nabla$ be the Weyl connection of $(M,E,F)$ determined by $\Upsilon\in\Omega^1(M)$; see the beginning of the section. Using the formulae in the beginning of the section, one computes that
\begin{align*}
\hat\nabla^{\ell}_{\alpha\otimes\beta}\alpha=\nabla^{\ell}_{\alpha\otimes\beta}\alpha+\Upsilon(\alpha\otimes\beta)\alpha\quad\text{and}\quad
\hat\nabla^{F|_N}_{\alpha\otimes\beta}\tilde\beta=\nabla^{F|_N}_{\alpha\otimes\beta}\tilde\beta+\Upsilon(\alpha\otimes\tilde\beta)\beta
\end{align*}
for all $\alpha\in\Gamma(\ell)$ and $\beta,\tilde\beta\in\Gamma(F|_N)$. In particular, the connection on $TN$ induced by $\nabla$ and $\hat\nabla$ are projectively equivalent. By construction, the connection on $TN$ has torsion $\tau|_{\wedge^2\,TN}$. This proves (i).

Now we turn to (ii). Consider the nowhere vanishing section $(L\,\varphi)|_{\tilde N}\in\Gamma(F^*|_{\tilde N})$. Viewing $F|_{\tilde N}=F^{**}|_{\tilde N}$, we obtain its interior product
\begin{align*}
\iota_{(L\,\varphi)|_{\tilde N}}:\wedge^n\,F|_{\tilde N}\xrightarrow{\cong}\wedge^{n-1}\,\tilde F\subseteq\wedge^{n-1}\,F|_{\tilde N}
\end{align*}
which defines an isomorphism onto its image. In particular, this isomorphism together with the defining isomorphism $\wedge^2\,E^*\cong\wedge^n\,F$ of $(M,E,F)$ give rise to
\[\wedge^2\,E^*|_{\tilde N}\cong\wedge^{n-1}\,\tilde F.\]
Thus $(\tilde N,E|_{\tilde N},\tilde F)$ is an AG-structure of type $(2,n-1)$. This proves (ii).
\end{proof}

We refer the reader to the recent papers \cite{FG19} and \cite{FG23} as well as the references therein for related results for different geometric structures.

\begin{bibdiv}
	\begin{biblist}
\bib{CEMN20}{article}{		
    AUTHOR = {Calderbank, David M. J.},
    AUTHOR = {Eastwood, Michael G.},
    AUTHOR = {Matveev, Vladimir S.},
    AUTHOR = {Neusser, Katharina},
     TITLE = {C-projective geometry},
   JOURNAL = {Mem. Amer. Math. Soc.},
    VOLUME = {267},
      YEAR = {2020},
    NUMBER = {1299},
     PAGES = {v+137},
      ISSN = {0065-9266,1947-6221},
      ISBN = {978-1-4704-4300-9; 978-1-4704-6397-7},
       DOI = {10.1090/memo/1299},
       URL = {https://doi.org/10.1090/memo/1299},
}

\bib{CGH}{article}{
    AUTHOR = {\v Cap, Andreas},
    AUTHOR = {Gover, A. Rod},
    AUTHOR = {Hammerl, Matthias},
     TITLE = {Holonomy reductions of {C}artan geometries and curved orbit
              decompositions},
   JOURNAL = {Duke Math. J.},
    VOLUME = {163},
      YEAR = {2014},
    NUMBER = {5},
     PAGES = {1035--1070},
      ISSN = {0012-7094,1547-7398},
       DOI = {10.1215/00127094-2644793},
       URL = {https://doi.org/10.1215/00127094-2644793},
}

\bib{CGH21}{article}{
    AUTHOR = {\v Cap, Andreas},
    AUTHOR = {Gover, A. Rod},
    AUTHOR = {Hammerl, Matthias},
     TITLE = {Parabolic compactification of homogeneous spaces},
   JOURNAL = {J. Inst. Math. Jussieu},
    VOLUME = {20},
      YEAR = {2021},
    NUMBER = {4},
     PAGES = {1371--1408},
      ISSN = {1474-7480,1475-3030},
       DOI = {10.1017/S1474748019000513},
       URL = {https://doi.org/10.1017/S1474748019000513},
}

\bib{Book}{book}{
author={\v Cap, Andreas},
author={Slov\'ak, Jan},
     TITLE = {Parabolic geometries. {I}},
    SERIES = {Mathematical Surveys and Monographs},
    VOLUME = {154},
      NOTE = {Background and general theory},
 PUBLISHER = {American Mathematical Society, Providence, RI},
      YEAR = {2009},
     PAGES = {x+628},
      ISBN = {978-0-8218-2681-2},
       DOI = {10.1090/surv/154},
       URL = {https://doi.org/10.1090/surv/154},
}

\bib{CSS}{article}{
 ISSN = {0003486X},
 URL = {http://www.jstor.org/stable/3062111},
 author={\v Cap, Andreas},
 
 author={Sou\v cek, Vladim\'ir},
 author={Slov\'ak, Jan},
 journal = {Annals of Mathematics},
 number = {1},
 pages = {97--113},
 publisher = {Annals of Mathematics},
 title = {Bernstein-Gelfand-Gelfand Sequences},
 volume = {154},
 doi = {10.2307/3062111},
 year = {2001}
}

\bib{EM}{article}{
   author={Eastwood, Michael},
   author={Matveev, Vladimir},
   title={Metric connections in projective differential geometry},
   conference={
      title={Symmetries and overdetermined systems of partial differential
      equations},
   },
   book={
      series={IMA Vol. Math. Appl.},
      volume={144},
      publisher={Springer, New York},
   },
   isbn={978-0-387-73830-7},
   date={2008},
   pages={339--350},
   review={\MR{2384718}},
   doi={10.1007/978-0-387-73831-4\_16},
}

\bib{FG19}{article}{
    AUTHOR = {Flood, Keegan},
    AUTHOR = {Gover, A. Rod},
     TITLE = {Metrics in projective differential geometry: the geometry of
              solutions to the metrizability equation},
   JOURNAL = {J. Geom. Anal.},
    VOLUME = {29},
      YEAR = {2019},
    NUMBER = {3},
     PAGES = {2492--2525},
      ISSN = {1050-6926,1559-002X},
       DOI = {10.1007/s12220-018-0084-5},
       URL = {https://doi.org/10.1007/s12220-018-0084-5},
}

\bib{FG23}{article}{
    AUTHOR = {Flood, Keegan},
    AUTHOR = {Gover, A. Rod},
     TITLE = {Geometry of solutions to the c-projective metrizability
              equation},
   JOURNAL = {Ann. Mat. Pura Appl. (4)},
    VOLUME = {202},
      YEAR = {2023},
    NUMBER = {3},
     PAGES = {1343--1368},
      ISSN = {0373-3114,1618-1891},
       DOI = {10.1007/s10231-022-01283-x},
       URL = {https://doi.org/10.1007/s10231-022-01283-x},
}

\bib{HSSS12A}{article}{
    AUTHOR = {Hammerl, Matthias},
    AUTHOR = {Somberg, Petr},
    AUTHOR = {Sou\v cek, Vladim\'ir},
    AUTHOR = {\v Silhan, Josef},
     TITLE = {On a new normalization for tractor covariant derivatives},
   JOURNAL = {J. Eur. Math. Soc. (JEMS)},
    VOLUME = {14},
      YEAR = {2012},
    NUMBER = {6},
     PAGES = {1859--1883},
      ISSN = {1435-9855,1435-9863},
       DOI = {10.4171/JEMS/349},
       URL = {https://doi.org/10.4171/JEMS/349},
}

\bib{HSSS12B}{article}{
    AUTHOR = {Hammerl, Matthias},
    AUTHOR = {Somberg, Petr},
    AUTHOR = {Sou\v cek, Vladim\'ir},
    AUTHOR = {\v Silhan, Josef},
     TITLE = {Invariant prolongation of overdetermined {PDE}s in projective,
              conformal, and {G}rassmannian geometry},
   JOURNAL = {Ann. Global Anal. Geom.},
    VOLUME = {42},
      YEAR = {2012},
    NUMBER = {1},
     PAGES = {121--145},
      ISSN = {0232-704X,1572-9060},
       DOI = {10.1007/s10455-011-9306-9},
       URL = {https://doi.org/10.1007/s10455-011-9306-9},
}

\bib{Lee}{book}{
    AUTHOR = {Lee, John M.},
     TITLE = {Introduction to smooth manifolds},
    SERIES = {Graduate Texts in Mathematics},
    VOLUME = {218},
   EDITION = {Second},
 PUBLISHER = {Springer, New York},
      YEAR = {2013},
     PAGES = {xvi+708},
      ISBN = {978-1-4419-9981-8},
  DOI = {10.1007/978-1-4419-9982-5},
}

	\end{biblist}
\end{bibdiv}

\end{document}